\newtheorem{thm}{Theorem}[section]
\newtheorem{lem}[thm]{Lemma}
\newtheorem{prop}[thm]{Proposition}
\theoremstyle{definition}
\theoremstyle{remark}
\newtheorem{rem}[thm]{Remark}
\newtheorem*{ex}{Example}
\numberwithin{equation}{section}
\begin{document}
	
	%
	%
	%
	%
	%
	%
	%
	%
	%

	\title[variable exponent Bergman space]
	{Toeplitz operators and weighted composition operators on variable exponent Bergman spaces}

	\author[Cezhong Tong]{Cezhong Tong}
	
	\address{
		Department of Mathematics\\
		Hebei University of Technology\\
		Tianjin 300401 China
	}
	
	\email{ctong@hebut.edu.cn}
	
	\author[Zicong Yang]{Zicong Yang}
	
	\address{
		Department of Mathematics\\
		Hebei University of Technology\\
		Tianjin 300401 China
	}
	
	\email{zicongyang@126.com; zc25@hebut.edu.cn}
	
	\author[Zehua Zhou]{Zehua Zhou}
	
	\address{
		School of Mathematics\\
		Tianjin University\\
		Tianjin 300354 China
	}
	
	\email{zehuazhoumath@aliyun.com}
	
	\thanks{Tong was supported in part by the National Natural Science Foundation of China (Grant No. 12171136, 12411530045). Yang was supported in part by the Natural Science Foundation of Hebei Province (Grant No. A2023202031, A2023202037). Zhou was supported in part by the National Natural Science Foundation of China (Grant No. 12171353).}
	\subjclass{30H20; 47B33; 47B35}
	
	\keywords{variable exponent Bergman space, weighted composition operator, Toeplitz operator.}
	
	
	\begin{abstract}
		In a recent paper [JFA, 278 (2020), 108401], Choe et al. obtained characterizations for bounded and compact differences of two weighted composition operators acting on standard weighted Bergman spaces over the unit disk in terms of Carleson measures. Then they extended the results to the ball setting. In this paper, we further generalize those results to variable exponent Bergman spaces over the unit ball. Our proofs, when restricted to the case of constant variable, are new and simpler. Moreover, boundedness and compactness of Toeplitz operators on variable exponent Bergman spaces are also characterized.
	\end{abstract}
	
	\maketitle
	\section{Introduction}
	
	\subsection{Variable exponent Bergman space}
	
	The theory of variable exponent spaces has witnessed an explosive growth in recent years. The study of such spaces has an intrinstic interest and has a wide variety of applications, such as in differential equations and minimization problems with non-standard growth. Variable exponent Lebesgue spaces are generalizations of  classical Lebesgue spaces where the exponent is a measurable function and thus the exponent may vary. It seems to appear in the literature for the first time in a 1931 paper by Orlicz \cite{Ow}. One can refer to the monographs \cite{CdFa,DlHpHpRm} for more history and some real analysis theory about variable exponent Lebesgue spaces. 
	
	The variable exponent Bergman space over the unit disk was first introduced by Chac\'on and Rafeiro \cite{CgRh}. Research on variable exponent Bergman spaces and operators acting on them is in fact at the very beginning, see \cite{CgRh1,CgRhVj, Da, Ft, YzZz}. In this paper, we aim to study Toeplitz operators and weighted composition operators on variable exponent Bergman spaces. We first recall some basic notations. 
	
	Let $\mathbb{C}^n$ be the $n$-dimensional complex Euclidean space and $\mathbb{B}_n$ be the open unit ball in $\mathbb{C}^n$. For any two points $z=(z_1,\cdots,z_n)$ and $w=(w_1,\cdots,w_n)$ in $\mathbb{C}^n$
	, we write $\langle z,w\rangle=\sum_{i=1}^n z_i\cdot\overline{w}_i$ and $|z|=\langle z,z\rangle ^{1/2}$. Given a positive Borel measure $\mu$ on $\mathbb{B}_n$, a measurable function $p:\mathbb{B}_n\to [1,\infty)$ is called a variable exponent. Denote by $p^+:={\rm ess}\sup_{z\in\mathbb{B}_n}p(z)$ and $p^-:={\rm ess}\inf_{z\in\mathbb{B}_n}p(z)$. For a complex-valued function $f$ on $\mathbb{B}_n$, we define the modular $\rho_{p(\cdot),\mu}$ by 
	\[\rho_{p(\cdot),\mu}(f)=\int_{\mathbb{B}_n}|f(z)|^{p(z)}d\mu(z),\]
	and the Luxemburg-Nakano norm by 
	\[\|f\|_{p,\mu}=\inf\left\{\gamma>0:\rho_{p(\cdot),\mu}\left(\frac{f}{\gamma}\right)\leq 1\right\}.\]
	
	There are some close connections between the modular and the norm, see  \cite{DlHpHpRm} for example. 
	\begin{lem}\label{lemma1.1}
		Suppose $p^+<\infty$, then the following conditions hold:
		\begin{itemize}
			\item [(i)] $\|f\|_{p(\cdot),\mu}\leq 1$ and $\rho_{p(\cdot),\mu}\leq 1$ are equivalent, as are $\|f\|_{p(\cdot),\mu}=1$ and $\rho_{p(\cdot),\mu}(f)=1$. 
			\item [(ii)] $\|f\|_{p(\cdot),\mu}\leq \rho_{p(\cdot),\mu}(f)+1$.
			\item [(iii)] The modular convergence and norm convergence are equivalent.
		\end{itemize}
	\end{lem}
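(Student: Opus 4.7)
The plan is to exploit the finiteness of $p^+$ through two elementary properties of the map $\Phi:\gamma\mapsto \rho_{p(\cdot),\mu}(f/\gamma)$: monotonicity (non-increasing in $\gamma$) and continuity on the interior of the interval where $\Phi$ is finite. Monotonicity is immediate since $\gamma\mapsto \gamma^{p(z)}$ is increasing and, moreover, satisfies $\gamma^{p(z)}\ge \gamma$ for $\gamma\ge 1$. Continuity comes from the dominated convergence theorem applied with the comparison $|f/\gamma|^{p(z)}\le (\gamma_0/\gamma)^{p^+}|f/\gamma_0|^{p(z)}$ for $\gamma$ near a fixed $\gamma_0$, and this is the one place where $p^+<\infty$ is essential. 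Together these properties force $\{\gamma>0:\Phi(\gamma)\le 1\}$ to be a closed half-line $[\gamma_*,\infty)$ (whenever $f$ is not null), so the Luxemburg--Nakano infimum is attained and $\Phi(\|f\|_{p(\cdot),\mu})=1$ whenever $\|f\|_{p(\cdot),\mu}>0$.

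Part (i) is then transparent. The direction $\rho_{p(\cdot),\mu}(f)\le 1\Rightarrow\|f\|_{p(\cdot),\mu}\le 1$ uses $\gamma=1$ in the definition. For the converse, $\|f\|_{p(\cdot),\mu}\le 1$ combined with $\Phi(\|f\|_{p(\cdot),\mu})\le 1$ and the pointwise bound $|f(z)|^{p(z)}\le (|f(z)|/\|f\|_{p(\cdot),\mu})^{p(z)}$ (valid because the divisor is $\le 1$) yield $\rho_{p(\cdot),\mu}(f)\le 1$. For the equality statement, the half-line structure identifies $\|f\|_{p(\cdot),\mu}$ as the unique positive $\gamma$ with $\Phi(\gamma)=1$, so $\|f\|_{p(\cdot),\mu}=1$ is equivalent to $\rho_{p(\cdot),\mu}(f)=\Phi(1)=1$. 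Part (ii) is a one-step calculation: taking $\gamma=\rho_{p(\cdot),\mu}(f)+1\ge 1$ and using $\gamma^{p(z)}\ge \gamma$ gives $\Phi(\gamma)\le \rho_{p(\cdot),\mu}(f)/\gamma<1$, and the definition of the norm delivers the bound.

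For part (iii), applied to $g_n:=f_n-f$, I would combine (i) with the two-sided bound $p^-\le p(z)\le p^+$. If $\alpha_n:=\|g_n\|_{p(\cdot),\mu}\to 0$, then eventually $\alpha_n\le 1$ and (i) gives $\rho_{p(\cdot),\mu}(g_n/\alpha_n)\le 1$; factoring $\alpha_n^{p(z)}\le \alpha_n^{p^-}$ out of the integrand yields $\rho_{p(\cdot),\mu}(g_n)\le \alpha_n^{p^-}\to 0$. Conversely, given $\rho_{p(\cdot),\mu}(g_n)\to 0$ and $\varepsilon\in(0,1)$, the estimate $\rho_{p(\cdot),\mu}(g_n/\varepsilon)\le \rho_{p(\cdot),\mu}(g_n)/\varepsilon^{p^+}$ is eventually $\le 1$, so (i) forces $\|g_n\|_{p(\cdot),\mu}\le\varepsilon$. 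The main technical obstacle is really the continuity of $\Phi$ in the first paragraph; items (i)--(iii) then fall out as essentially bookkeeping.
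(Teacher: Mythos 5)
Your proof is correct. The paper does not actually prove Lemma 1.1 --- it is quoted from the reference [DlHpHpRm] --- and your argument is essentially the standard one found there: establish the unit-ball/norm-modular property via monotonicity and continuity of $\gamma\mapsto\rho_{p(\cdot),\mu}(f/\gamma)$ (where $p^+<\infty$ guarantees finiteness and continuity of this map on all of $(0,\infty)$ once $f\in L^{p(\cdot)}$), then deduce (i)--(iii) by the elementary exponent estimates $\gamma^{p(z)}\ge\gamma$ for $\gamma\ge1$ and $\alpha^{p^-}\ge\alpha^{p(z)}\ge\alpha^{p^+}$ for $0<\alpha\le1$. The only cosmetic slip is the dominating function for the continuity step: the factor $(\gamma_0/\gamma)^{p^+}$ bounds $(\gamma_0/\gamma)^{p(z)}$ only when $\gamma\le\gamma_0$, so one should take $\max\{(\gamma_0/\gamma)^{p^-},(\gamma_0/\gamma)^{p^+}\}$ (or simply fix $\gamma_0$ below the neighborhood under consideration); this does not affect the argument.
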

	
	For $p^+<\infty$, the variable exponent Lebesgue space $L^{p(\cdot)}(\mathbb{B}_n,d\mu)$ consists of all complex-valued measurable functions $f$ such that $\rho_{p(\cdot),\mu}(f)<\infty$. It is a Banach space equipped with the Luxemburg-Nakano norm. Let $dV$ be the normalized Lebesgue measure on $\mathbb{B}_n$ such that $V(\mathbb{B}_n)=1$. When dealing with the measure $dV$, we will drop the subscript $\mu$ and write the modular simply as $\rho_{p(\cdot)}$ and the norm as $\|\cdot\|_{p(\cdot)}$ respectively. The variable exponent Bergman space
	\[A^{p(\cdot)}(\mathbb{B}_n)=L^{p(\cdot)}(\mathbb{B}_n,dV)\cap H(\mathbb{B}_n),\]
	where $H(\mathbb{B}_n)$ is the space of all holomorphic functions on $\mathbb{B}_n$. It is easy to show that $A^{p(\cdot)}(\mathbb{B}_n)$ is a closed subspace of $L^{p(\cdot)}(\mathbb{B}_n,dV)$. When $p(\cdot)$ is a constant, it reduces the classical Bergman space $A^p(\mathbb{B}_n)$, see \cite{ZrZk} for details.
	
	A function $p:\mathbb{B}_n\to\mathbb{R}$ is said to be log-H${\rm\ddot{o}}$lder continuous or satisfy the Dini-Lipschitz condition if there exists a positive constant $C_{log}$ such that
	\[|p(z)-p(w)|\leq\frac{C_{log}}{\log\frac{1}{|z-w|}}\]
	for all $z,w\in\mathbb{B}_n$ with $|z-w|<\frac{1}{2}$. The set of all log-H${\rm\ddot{o}}$lder continuous functions $p(\cdot)$ on $\mathbb{B}_n$ with $1<p^-\leq p^+<\infty$ will be denoted by $\mathcal{P}^{log}(\mathbb{B}_n)$.
	
	Notice that $A^{p(\cdot)}(\mathbb{B}_n)\subset A^{1}(\mathbb{B}_n)$. According to \cite[Theorem 2.7]{Zk}, one has the following reproducing property
	\begin{equation}\label{equa1.1}
		f(z)=\int_{\mathbb{B}_n}f(w)\overline{K_{z}(w)}dV(w)
	\end{equation}
	for every $f\in A^{p(\cdot)}(\mathbb{B}_n)$ and $z\in\mathbb{B}_n$, where
	\begin{equation}\label{equa1.2}
		K_{z}(w)=\frac{1}{(1-\langle w,z\rangle)^{n+1}}.
	\end{equation}
	The Bergman projection operator $P$ is defined for functions $f$ on $\mathbb{B}_n$ by
	\[Pf(z)=\int_{\mathbb{B}_n}\frac{f(w)}{(1-\langle z,w\rangle)^{n+1}}dV(w),\quad f\in L^{p(\cdot)}(\mathbb{B}_n,dV).\]
	And let 
	\[\hat{P}f(z)=\int_{\mathbb{B}_n}\frac{f(w)}{|1-\langle z,w\rangle|^{n+1}}dV(w),\quad f\in L^{p(\cdot)}(\mathbb{B}_n,dV).\]
	\cite[Theorem 1.6]{BdTeZa} tells us that if $p(\cdot)$ is log-H${\rm\ddot{o}}$lder continuous on $\mathbb{B}_n$, then the Bergman projection $P$ is bounded from $L^{p(\cdot)}(\mathbb{B}_n,dV)$ onto $A^{p(\cdot)}(\mathbb{B}_n)$ and $\hat{P}$ is bounded on $L^{p(\cdot)}(\mathbb{B}_n,dV)$. Consequently, by a simple modification as in the classical case, when $p(\cdot)\in\mathcal{P}^{log}(\mathbb{B}_n)$, the dual space of $A^{p(\cdot)}(\mathbb{B}_n)$ can be identified with $A^{p'(\cdot)}(\mathbb{B}_n)$, where $\frac{1}{p(\cdot)}+\frac{1}{p'(\cdot)}=1$. Every element $\phi\in (A^{p(\cdot)}(\mathbb{B}_n))^{*}$ is associate to a function $g\in A^{p'(\cdot)}(\mathbb{B}_n)$ in such a way
	\[\phi(f)=\int_{\mathbb{B}_n}f(z)\overline{g(z)}dV(z),\quad f\in A^{p(\cdot)}(\mathbb{B}_n),\]
	and $\|\phi\|\simeq \|g\|_{p'(\cdot)}$.

	\subsection{Carleson measure and Toeplitz operator}
	
	Let $\mu$ be a positive Borel measure on $\mathbb{B}_n$. We say that $\mu$ is a Carleson measure for $A^{p(\cdot)}(\mathbb{B}_n)$ if there exists a constant $C>0$ such that
	\[\|f\|_{p(\cdot),\mu}\leq C\|f\|_{p(\cdot)}\]
	for all $f\in A^{p(\cdot)}(\mathbb{B}_n)$. That is, $\mu$ is a Carleson measure for $A^{p(\cdot)}(\mathbb{B}_n)$ if the embedding $A^{p(\cdot)}\subset L^{p(\cdot)}(\mathbb{B}_n,d\mu)$ is continuous. If, in addition, the embedding $A^{p(\cdot)}(\mathbb{B}_n)\subset L^{p(\cdot)}(\mathbb{B}_n,d\mu)$ is compact, then $\mu$ is said to be a compact Carleson measure for $A^{p(\cdot)}(\mathbb{B}_n)$.
	
	According to \cite[Theorem 3.4.7]{DlHpHpRm}, the space $L^{p(\cdot)}(\mathbb{B}_n,d\mu)$ is reflexive when $p(\cdot)\in\mathcal{P}^{\log}(\mathbb{B}_n)$. And the linear combinations of the evaluation functionals are dense in $A^{p'(\cdot)}(\mathbb{B}_n)$. So we conclude that $\mu$ is a compact Carleson measure for $A^{p(\cdot)}(\mathbb{B}_n)$ if and only if 
	\[\|f_j\|_{p(\cdot),\mu}\to 0\]
	for any bounded sequence $\{f_j\}_{j=1}^{\infty}$ in $A^{p(\cdot)}(\mathbb{B}_n)$ that converges to 0 uniformly on compact subsets of $\mathbb{B}_n$.
	
	The concept of Carleson measures was first introduced by L. Carleson \cite{Cl} to prove the corona theorem and to solve interpolation problems for the algebra of all bounded holomorphic functions on the unit disk. Later, Hastings \cite{Hw} characterized the Bergman-Carleson measures. Luecking \cite{Ld} considered the hyperbolic geometry of the disk and characterized Bergman-Carleson measure in terms of the measure on pseudo-hyperbolic disks. By now, Carleson measures are a powerful tool for the study of function spaces and operators acting on them, and have been extended to more general setting. See for example \cite{PjZr, Wz}. Recently, Bergman-Carleson measures with variable exponent were characterized in \cite{BdTeZa, CgRhVj}.
	
	Given $\beta\geq 0$ and a positive Borel measure $\mu$ on $\mathbb{B}_n$, define the Toeplitz operator $T_{\mu}^{\beta}$ as follows:
	\[T_{\mu}^{\beta}f(z)=\int_{\mathbb{B}_n}\frac{f(w)}{(1-\langle z,w\rangle)^{n+1+\beta}}d\mu(w),\quad z\in\mathbb{B}_n.\]
	An account of the theory of Toeplitz operators acting on Bergman spaces can be found in \cite{Zk1}. Pau and Zhao \cite{PjZr} studied the boundedness of the Toeplitz operator $T_{\mu}^{\beta}$ between standard weighted Bergman spaces in terms of Carleson measures. There are also many works that focus on Toeplitz operators acting on various weighted Bergman spaces, see \cite{LxAh, PjRj} and the references therein. Our first aim in this paper is to characterize the boundedness and compactness of $T_{\mu}^{\beta}$ on $A^{p(\cdot)}(\mathbb{B}_n)$, see Theorem 3.2 and Theorem 3.3 below. 
	
	\subsection{Weighted composition operator}
	
	Denote by $S(\mathbb{B}_n)$ the set of all holomorphic self-maps of $\mathbb{B}_n$. Given $u\in H(\mathbb{B}_n)$ and $\varphi\in S(\mathbb{B}_n)$, the weighted composition operator $C_{u,\varphi}$ on $H(\mathbb{B}_n)$ is defined by 
	\[C_{u,\varphi}f=u\cdot f\circ\varphi,\quad f\in H(\mathbb{B}_n).\]
	It is known that weighted composition operators are closely related to the isometries on classical Hardy or Bergman spaces. See for example \cite{Kc}. When $u=1$, it reduces to the composition operator $C_{\varphi}$. The relationship between the operator-theoretic properties of $C_{\varphi}$ and the function-theoretic properties of $\varphi$ has been studied extensively during the past several decades. One can refer to the standard reference \cite{CcMb} for various aspects on the theory of composition operators.
	
	In the study of the isolation phenomena in the space of composition operators acting on Hardy space, Shapiro and Sundberg \cite{SjSc} questioned whether two composition operators are in the same path component when their difference is compact. In 2005, Moorhouse \cite{Mj} characterized the compact difference $C_{\varphi}-C_{\psi}$ on standard weighted Bergman spaces and answered the Shapiro-Sundberg question in the negative. By using Joint-Carleson measures, Koo and Wang \cite{KhWm} studied the bounded and compact differences $C_{\varphi}-C_{\psi}$ in $A_{\alpha}^p(\mathbb{B}_n)$. In 2017, Acharyya and Wu \cite{AsWz} obtained a compactness criteria for $C_{u,\varphi}-C_{v,\psi}$ on weighted Bergman spaces. However, they restricted the weights $u,v$ to satisfy a certain growth condition. Recently, Choe et al. \cite{CbCkKhYj} completely characterized the bounded and compact differences $C_{u,\varphi}-C_{v,\psi}$ on weighted Bergman spaces over the unit disk in terms of Carleson measures under an extra $L^p$-condition for $u$ and $v$. And then they extended the results to the ball setting and removed the $L^p$-condition of $u$ and $v$, see \cite{CbCkKhPi}. To the best of our knowledge, most of the studies of the differences of two weighted composition operators rely on the method in \cite{CbCkKhPi, CbCkKhYj}, which involves certain technical lemmas, see for example \cite{CbCkKhYj1, Lc}. So our second aim in this paper is to construct a new method and generalize those results to the variable exponent setting. Our proofs, when restricted to the constant variable case, are new and simpler, see Theorem 4.5 and Theorem 4.7 below.
	
	This paper is organized as follows. In Section 2, we present some preliminary facts and auxiliary lemmas that will be used later. Section 3 is devoted to describing the boundedness and compactness of the Toeplitz operator $T_{\mu}^{\beta}$. In Section 4, we investigate the properties of $C_{u,\varphi}$ on $A^{p(\cdot)}(\mathbb{B}_n)$. We show that there exist $p_0(\cdot)\in\mathcal{P}^{log}(\mathbb{B}_n)$, $u_0\in H(\mathbb{B}_n)$ and $\varphi_0\in S(\mathbb{B}_n)$ such that $C_{u_0,\varphi_0}$ is bounded on every $A^p(\mathbb{B}_n)$, but not on $A^{p_0(\cdot)}(\mathbb{B}_n)$. And then we characterize the bounded and compact differences $C_{u,\varphi}-C_{v,\psi}$ on $A^{p(\cdot)}(\mathbb{B}_n)$.
	
	Throughout the paper we use the same letter $C$ to denote positive constants which may vary at different occurrences but do not depend on the essential argument. For non-negative quantities $A$ and $B$, we write $A\lesssim B$ (or equivalently $B\gtrsim A$) if there exists an absolute constant $C>0$ such that $A\leq CB$. $A\simeq B$ means both $A\lesssim B$ and $B\lesssim A$.
	
	\section{Preliminaries}
	
	In this section we recall some basic facts and present some auxiliary lemmas which will be used in the sequel.
	
	Given $z\in\mathbb{B}_n$, let $\sigma_z$ be the involutive automorphism of $\mathbb{B}_n$ that exchanges $0$ and $z$. More explicitly,
	\[\sigma_{z}(w)=\frac{z-P_z(w)}{1-\langle w,z\rangle}+\sqrt{1-|z|^2}\frac{P_z(w)-w}{1-\langle w,z\rangle},\quad w\in\mathbb{B}_n,\]
	where $P_z$ is the orthogonal projection from $\mathbb{C}^n$ onto the one dimensional subspace generated by $z$. The pseudo-hyperbolic distance between $z,w\in\mathbb{C}_n$ is given by 
	\[d(z,w)=|\sigma_{z}(w)|.\]
	It is easy to check that
	\begin{equation}\label{equa2.1}
		1-d(z,w)^2=\frac{(1-|z|^2)(1-|w|^2)}{|1-\langle z,w\rangle|^2}.
	\end{equation}
	The pseudo-hyperbolic ball centered at $z\in\mathbb{B}_n$ with radius $s\in (0,1)$ is defined by
	\[E(z,s)=\{w\in\mathbb{B}_n:d(z,w)<s\}.\]
	Given $s\in (0,1)$, it is well-known that 
	\begin{equation}\label{equa2.2}
		V(E(z,s))\simeq (1-|z|^2)^{n+1},
	\end{equation}
	and
	\begin{equation}\label{equa2.3}
		1-|z|^2\simeq 1-|w|^2\simeq |1-\langle z,w\rangle|
	\end{equation}
	for all $z\in\mathbb{B}_n$ and $w\in E(z,s)$. Moreover,
	\begin{equation}\label{equa2.4}
		|1-\langle z,a\rangle|\simeq |1-\langle w,a\rangle|
	\end{equation}
	for all $a,z$ and $w$ in $\mathbb{B}_n$ with $d(z,w)<s$. Here, all the constants suppressed depend only on $n$ and $s$.
	
	The Bergman metric between $z,w\in\mathbb{B}_n$ is given by 
	\[\beta(z,w)=\frac{1}{2}\log\frac{1+d(z,w)}{1-d(z,w)}.\]
	Write $B(z,r)=\{w\in\mathbb{B}_n:\beta(z,w)<r\}$ for the Bergman metric ball centered at $z$ with radius $r>0$. Clearly, $E(z,s)=B(z,r)$ for $s=\tanh(r)$.
	
	\begin{lem}\label{lemma2.1}
		Let $r>0$ and $p(\cdot)\in\mathcal{P}^{log}(\mathbb{B}_n)$. Then
		\[(1-|a|^2)^{p(z)}\simeq (1-|a|^2)^{p(w)}\]
		for all $a\in\mathbb{B}_n$ and $z,w\in B(a,r)$.
	\end{lem}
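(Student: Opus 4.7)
The plan is to translate the comparability into a uniform bound on an exponent. Writing
\[\frac{(1-|a|^2)^{p(z)}}{(1-|a|^2)^{p(w)}}=\exp((p(z)-p(w))\log(1-|a|^2)),\]
the conclusion is equivalent to showing that $|p(z)-p(w)|\cdot\log\frac{1}{1-|a|^2}$ is bounded by a constant depending only on $n$, $r$, $p^+$, $p^-$, and $C_{log}$, uniformly in $a\in\mathbb{B}_n$ and $z,w\in B(a,r)$.

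I would split into two regimes. First, if $|a|\leq 1-\delta$ for a fixed $\delta=\delta(r)$ to be chosen, then $\log\frac{1}{1-|a|^2}\leq\log\frac{1}{\delta}$ and $|p(z)-p(w)|\leq p^+-p^-$, so the product is trivially controlled. The interesting regime is $|a|>1-\delta$. Here the key auxiliary estimate is the Euclidean diameter bound
\[|z-w|\leq C(r)(1-|a|^2)^{1/2},\quad z,w\in B(a,r),\]
which is not explicitly listed among (2.1)--(2.4) but follows from the orthogonal decomposition of $\sigma_a$: writing $P_a$ for the orthogonal projection of $\mathbb{C}^n$ onto $\mathbb{C}a$ and $Q_a(z)=z-P_a(z)$, the formula for $\sigma_a$ combined with $|\sigma_a(z)|\leq\tanh(r)$ forces $|a-P_a(z)|\lesssim 1-|a|^2$ and $|Q_a(z)|\lesssim(1-|a|^2)^{1/2}$; the triangle inequality then finishes the claim.

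Choosing $\delta$ small enough that $C(r)\delta^{1/2}<\tfrac{1}{2}$ puts $|z-w|$ into the range where the log-H\"older hypothesis applies, yielding
\[|p(z)-p(w)|\leq\frac{C_{log}}{\log(1/|z-w|)}\leq\frac{C_{log}}{\tfrac{1}{2}\log(1/(1-|a|^2))-\log C(r)}.\]
A further shrinking of $\delta$ makes the denominator at least $\tfrac{1}{4}\log\frac{1}{1-|a|^2}$, and hence $|p(z)-p(w)|\cdot\log\frac{1}{1-|a|^2}\leq 4C_{log}$, as needed.

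The main obstacle I expect is the Euclidean diameter estimate $|z-w|\lesssim(1-|a|^2)^{1/2}$ on Bergman balls: it is well known but requires one explicit computation with $\sigma_a$ rather than a direct appeal to (2.1)--(2.4). Everything else is bookkeeping around the log-H\"older hypothesis and the case split on the distance of $a$ to the boundary.
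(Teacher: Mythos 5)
Your proposal is correct and follows essentially the same route as the paper: both reduce the claim to a uniform bound on $|p(z)-p(w)|\log\frac{1}{1-|a|^2}$, derive the key Euclidean diameter estimate $|z-w|\lesssim(1-|z|^2)^{1/2}\simeq(1-|a|^2)^{1/2}$ from the explicit formula for the involutive automorphism together with $1-|a|^2\simeq|1-\langle z,w\rangle|$, and then feed this into the log-H\"older condition. The only cosmetic difference is that you handle the applicability of the log-H\"older hypothesis (which requires $|z-w|<\tfrac12$) by an explicit case split on the distance of $a$ to the boundary, whereas the paper absorbs this into the constants by writing $\log\frac{4}{|z-w|}$; both devices are standard and equivalent.
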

	
	\begin{proof}
		Assume $r>0$ and $z,w\in B(a,r)$, then $\beta(z,w)<2r$. By \eqref{equa2.3}, we have
		\begin{equation}\label{equa2.5}
			1-|a|^2\simeq 1-|z|^2\simeq |1-\langle z,w\rangle|.
		\end{equation}
		Since
		\[|\sigma_z(w)|^2=\frac{|z-P_z(w)|^2+(1-|z|^2)|w-P_z(w)|^2}{|1-\langle z,w\rangle|^2}\leq 1,\]
		together with \eqref{equa2.5}, we get
		\begin{equation*}
			\begin{split}
				|z-w|&\leq |z-P_z(w)|+|w-P_z(w)|\\
				&\lesssim |1-\langle w,z\rangle|+|1-\langle w,z\rangle|^{1/2}\\
				&\lesssim |1-\langle w,z\rangle|^{1/2}.
			\end{split}
		\end{equation*}
		Then it follows from the log-H${\rm\ddot{o}}$lder continuity of $p(\cdot)$ that 
		\begin{equation*}
			\begin{split}
				|p(z)-p(w)|\log\frac{1}{1-|a|^2}&\lesssim \frac{4C_{log}}{\log\frac{4}{|z-w|}}\log\frac{4}{|1-\langle z,w\rangle|}\\
				&\lesssim \frac{4C_{log}}{\log\frac{4}{|1-\langle z,w\rangle|^{1/2}}}\log\frac{4}{|1-\langle z,w\rangle|}\\
				&\lesssim 1.
			\end{split}
		\end{equation*}
		Hence $(1-|a|^2)^{-|p(z)-p(w)|}\leq e^{|p(z)-p(w)|\log\frac{1}{1-|a|^2}}\lesssim 1$. This shows exactly that $(1-|a|^2)^{p(z)}\simeq (1-|a|^2)^{p(w)}$.
	\end{proof}
	
	The following Jensen type inequality was proved in \cite{RhSs} in the context of spaces of homogeneous type. For the sake of completeness, we present the proof in detail.
	
	\begin{lem}\label{lemma2.2}
		Let $r>0$ and $p(\cdot)\in \mathcal{P}^{log}(\mathbb{B}_n)$. Then
		\begin{equation*}
			\left(\frac{1}{V(B(a,r))}\int_{B(a,r)}|f(w)|dV(w)\right)^{p(z)}\lesssim \frac{1}{V(B(a,r))}\int_{B(a,r)}|f(w)|^{p(w)}dV(w)+1
		\end{equation*} 
		for all $a\in\mathbb{B}_n$ and $z\in B(a,r)$, provided that $\|f\|_{p(\cdot)}\leq 1$.
	\end{lem}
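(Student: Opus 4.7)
The plan is to split into two cases according to the size of $\rho_B := \int_{B(a,r)} |f(w)|^{p(w)}\, dV(w)$ relative to $|B| := V(B(a,r))$. Since $\|f\|_{p(\cdot)} \leq 1$ we have $\rho_B \leq 1$, and since $V(\mathbb{B}_n) = 1$ we have $|B| \leq 1$. Observe also that the right-hand side of the claimed inequality is at least $1$.

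In Case 1, when $\rho_B \leq |B|$, I would use the elementary pointwise bound $|f(w)|^{p^-} \leq |f(w)|^{p(w)} + 1$ (valid since $p(w) \geq p^-$: check $|f(w)| \leq 1$ and $|f(w)| > 1$ separately) combined with the usual Jensen inequality for the constant exponent $p^- \geq 1$:
\[
\left(\frac{1}{|B|}\int_B |f|\, dV\right)^{p^-} \leq \frac{1}{|B|}\int_B |f|^{p^-}\, dV \leq \frac{\rho_B}{|B|} + 1 \leq 2.
\]
Thus $\frac{1}{|B|}\int_B |f|\, dV$ is controlled by a constant depending only on $p^\pm$, so its $p(z)$-power is too, and this is absorbed by the $+1$ on the right.

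In Case 2, when $\rho_B > |B|$, I would apply the variable exponent H\"older inequality
\[
\int_B |f|\, dV \lesssim \|f \chi_B\|_{p(\cdot)}\, \|\chi_B\|_{p'(\cdot)}.
\]
A standard log-H\"older argument (analogous to Lemma \ref{lemma2.1}) gives $\|\chi_B\|_{p'(\cdot)} \simeq |B|^{1/p'(z)}$. For the other factor I would establish the sharp bound $\|f \chi_B\|_{p(\cdot)} \lesssim \rho_B^{1/p(z)}$ by plugging $\gamma = C\rho_B^{1/p(z)}$ into the Luxemburg--Nakano definition and verifying that the resulting modular is at most $1$. This verification reduces to showing that $\rho_B^{(p(z)-p(w))/p(z)} \lesssim 1$ uniformly for $w \in B$, which is where the hypothesis $\rho_B > |B|$ pays off: it forces $|\log \rho_B| \leq \log(1/|B|)$, while log-H\"older together with $|z-w| \lesssim (1-|a|^2)^{1/2}$ (from the proof of Lemma \ref{lemma2.1}) and $|B| \simeq (1-|a|^2)^{n+1}$ gives $|p(z)-p(w)|/p(z) \lesssim 1/\log(1/|B|)$. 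Multiplying these two estimates produces a uniform constant. Assembling everything and raising to the $p(z)$-power yields $\left(\frac{1}{|B|}\int_B |f|\right)^{p(z)} \lesssim \rho_B/|B|$, which is dominated by the right-hand side.

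The main obstacle is producing the sharp norm estimate $\|f\chi_B\|_{p(\cdot)} \lesssim \rho_B^{1/p(z)}$ in Case 2; the naive bound $\|f\chi_B\|_{p(\cdot)} \leq \rho_B^{1/p^+}$ (obtained by testing $\gamma = \rho_B^{1/p^+}$) is insufficient, because $\rho_B^{p(z)/p^+}/|B|$ cannot be controlled by $\rho_B/|B|+1$ in general. The case split $\rho_B \leq |B|$ versus $\rho_B > |B|$ is precisely what allows the log-H\"older condition to upgrade the exponent from the crude $1/p^+$ to the sharp $1/p(z)$.
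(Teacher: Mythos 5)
Your argument is correct, but it is a genuinely different route from the paper's. The paper gives a single uncased argument: it splits $B(a,r)$ into $\{|f|\le 1\}$ and $\{|f|>1\}$, applies the classical Jensen inequality with the \emph{local} constant exponent $p_r^-=\operatorname{ess\,inf}_{B(a,r)}p$, bounds $|f|^{p_r^-}\le |f|^{p(w)}$ on the second set, and then finishes with two observations: $\int_{B(a,r)}|f|^{p(w)}dV\le\rho_{p(\cdot)}(f)\le 1$, so raising this quantity to the power $p(z)/p_r^-\ge 1$ only decreases it, while $V(B(a,r))^{p(z)/p_r^-}\simeq V(B(a,r))$ by Lemma \ref{lemma2.1}. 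You instead split into cases according to $\rho_B$ versus $|B|$, dispose of the easy case by Jensen with the global $p^-$, and in the hard case invoke the variable-exponent H\"older inequality together with the sharp local estimates $\|f\chi_B\|_{p(\cdot)}\lesssim\rho_B^{1/p(z)}$ and $\|\chi_B\|_{p'(\cdot)}\simeq|B|^{1/p'(z)}$. Both proofs ultimately rest on the same log-H\"older mechanism (controlling $t^{p(z)-p(w)}$ for $t$ comparable to $|B|$, exactly as in Lemma \ref{lemma2.1}); the paper's version is shorter and needs no duality or norm estimates for characteristic functions, whereas yours isolates reusable sharp norm bounds and makes transparent why the crude exponent $1/p^+$ fails and where the case hypothesis $\rho_B>|B|$ is spent. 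Two small points you should make explicit if you write this up: Case 2 tacitly assumes $\rho_B>0$ (otherwise $f\equiv 0$ on $B$ and there is nothing to prove), and the estimate for $\|\chi_B\|_{p'(\cdot)}$ uses $p^->1$ so that $p'(\cdot)$ inherits the log-H\"older condition with $(p')^+<\infty$; both are harmless under the standing hypothesis $p(\cdot)\in\mathcal{P}^{log}(\mathbb{B}_n)$.
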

	
	\begin{proof}
		For any $f\in L^{p(\cdot)}(\mathbb{B}_n,dV)$ with $\|f\|_{p(\cdot)}\leq 1$. Let $K=\{w\in B(a,r):|f(w)|\leq 1\}$. Since $p_r^{-}:={\rm ess}\inf_{B(a,r)}p(z)>1$, by H${\rm\ddot{o}}$lder's inequality, we obtain
		\begin{equation*}
			\begin{split}
				&\left(\frac{1}{V(B(a,r))}\int_{B(a,r)}|f(w)|dV(w)\right)^{p(z)}\\
				&\quad \lesssim 1+\left(\frac{1}{V(B(a,r))}\int_{B(a,r)-K}|f(w)|^{p_r^-}dV(w)\right)^{p(z)/p_r^{-}}\\
				&\quad \lesssim 1+\left(\frac{1}{V(B(a,r))}\int_{B(a,r)}|f(w)|^{p(w)}dV(w)\right)^{p(z)/p_r^{-}}.
			\end{split}
		\end{equation*}
		By condition (i) in Lemma \ref{lemma1.1}, we know that
		\[\int_{B(a,r)}|f(w)|^{p(w)}dV(w)\leq \rho_{p(\cdot)}(f)\leq 1.\]
		And by \eqref{equa2.2} and Lemma \ref{lemma2.1}, we have $V(B(a,r))^{p(z)/p_r^{-}}\simeq V(B(a,r))$ when $z\in B(a,r)$. Hence we get
		\begin{equation*}
			\left(\frac{1}{V(B(a,r))}\int_{B(a,r)}|f(w)|dV(w)\right)^{p(z)}\lesssim 1+ \frac{1}{V(B(a,r))}\int_{B(a,r)}|f(w)|^{p(w)}dV(w)
		\end{equation*} 
		for all $a\in\mathbb{B}_n$ and $z\in B(a,r)$.
	\end{proof}
	
	Using Lemma \ref{lemma2.2}, we could get the following pointwise estimation for functions in $A^{p(\cdot)}(\mathbb{B}_n)$.
	
	\begin{lem}\label{lemma2.3}
		Let $p(\cdot)\in\mathcal{P}^{log}(\mathbb{B}_n)$. Then
		\[|f(z)|\lesssim \frac{\|f\|_{p(\cdot)}}{(1-|z|^2)^{(n+1)/p(z)}}\]
		for all $f\in A^{p(\cdot)}(\mathbb{B}_n)$ and $z\in\mathbb{B}_n$.
	\end{lem}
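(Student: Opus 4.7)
The plan is to combine the standard holomorphic sub-mean value property on Bergman metric balls with the Jensen-type estimate already proved in Lemma \ref{lemma2.2}. Since both sides of the desired inequality are homogeneous of degree one in $f$, I first normalize by replacing $f$ with $f/\|f\|_{p(\cdot)}$, reducing to the case $\|f\|_{p(\cdot)} = 1$, in which Lemma \ref{lemma1.1}(i) gives $\rho_{p(\cdot)}(f) = 1$.

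Next, I would fix an arbitrary $r > 0$ once and for all, and invoke the standard sub-mean inequality for holomorphic functions on Bergman balls,
\[
|f(z)| \lesssim \frac{1}{V(B(z,r))} \int_{B(z,r)} |f(w)|\, dV(w),
\]
which follows from the automorphism invariance of the Bergman metric together with subharmonicity of $|f|$ (the constant-exponent analog is Lemma 2.24 of \cite{ZrZk}). Raising this to the $p(z)$-th power and applying Lemma \ref{lemma2.2} with $a = z$ — so that $z \in B(z,r)$ trivially — yields
\[
|f(z)|^{p(z)} \lesssim \frac{1}{V(B(z,r))} \int_{B(z,r)} |f(w)|^{p(w)}\, dV(w) + 1.
\]

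To finish, I would bound $\int_{B(z,r)} |f(w)|^{p(w)}\, dV(w) \leq \rho_{p(\cdot)}(f) \leq 1$ and use $V(B(z,r)) \simeq (1-|z|^2)^{n+1}$ from \eqref{equa2.2} to control the right-hand side by a constant multiple of $(1-|z|^2)^{-(n+1)}$ (the additive $1$ is absorbed since $1-|z|^2 \leq 1$). Taking the $p(z)$-th root and using that $p(z) \geq p^- > 1$ is bounded above by $p^+ < \infty$ gives the estimate for the normalized $f$, and unwinding the normalization produces the claim. The only substantive point requiring care is the sub-mean value inequality itself: because the Bergman ball $B(z,r)$ is an anisotropic ellipsoid and $V(B(z,r)) \simeq (1-|z|^2)^{n+1}$ is much smaller than the volume of any Euclidean ball of radius $\simeq \sqrt{1-|z|^2}$ containing it, a naive Euclidean mean-value bound is insufficient and one must use the automorphism-invariant formulation. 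This is entirely standard on $\mathbb{B}_n$, so no real obstacle arises.
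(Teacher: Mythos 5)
Your proposal is correct and follows essentially the same route as the paper: normalize to $\|f\|_{p(\cdot)}=1$ so that $\rho_{p(\cdot)}(f)=1$, apply the sub-mean value inequality on a Bergman ball, invoke Lemma \ref{lemma2.2} together with the volume estimate \eqref{equa2.2}, and finish by taking the $p(z)$-th root and rescaling. No substantive differences from the paper's argument.
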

	
	\begin{proof}
		Let $f\in A^{p(\cdot)}(\mathbb{B}_n)$ with $\|f\|_{p(\cdot)}=1$. Then $\rho_{p(\cdot)}(f)=1$ by condition (i) in Lemma \ref{lemma1.1}. For any $z\in\mathbb{B}_n$ and $r>0$, by the sub-mean value property of $|f|$, we have
		\[|f(z)|\lesssim \frac{1}{V(B(z,r))}\int_{B(z,r)}|f(w)|dV(w).\]
		Applying Lemma \ref{lemma2.2} and \eqref{equa2.2}, we obtain
		\begin{equation}\label{equa2.6}
			|f(z)|^{p(z)}\lesssim \frac{1}{(1-|z|^2)^{n+1}}\int_{B(z,r)}|f(w)|^{p(w)}dV(w)+1\lesssim \frac{1}{(1-|z|^2)^{n+1}}.
		\end{equation}
		Now for a general $f\in A^{p(\cdot)}(\mathbb{B}_n)$, we consider $\frac{f}{\|f\|_{p(\cdot)}}$. A routine scaling argument yields that
		\[|f(z)|\lesssim \frac{1}{(1-|z|^2)^{(n+1)/p(z)}}\|f\|_{p(\cdot)}.\]
		Constant suppressed here is independent of $z$ and $f$.
	\end{proof}
	
	Recall the reproducing formula \eqref{equa1.1}, by the dual theory in $A^{p(\cdot)}(\mathbb{B}_n)$, Lemma \ref{lemma2.3} tells us that $\|K_z\|_{p'(\cdot)}\lesssim \frac{1}{(1-|z|^2)^{(n+1)/p(z)}}$ for all $z\in\mathbb{B}_n$, where $K_z$ is the kernel function in \eqref{equa1.2}.
	
	Let $N$ be a positive integer. Inspired by the form of the function $K_z$, for any $z\in\mathbb{B}_n$, let 
	\[F_{z,N}(w)=\frac{1}{(1-\langle w,z\rangle)^{N}},\quad w\in\mathbb{B}_n.\]
	It is easy to check that 
	\begin{equation}\label{equa2.7}
		\|F_{z,N}\|_{p(\cdot)}\simeq (1-|z|^2)^{\frac{n+1}{p(z)}-N}
	\end{equation}
	for any $z\in\mathbb{B}_n$, when $p(\cdot)\in\mathcal{P}^{log}(\mathbb{B}_n)$ and $N\geq n+1$. And constant suppressed is independent of $z$.
	
	For $1\leq j\leq n$, let $e_j=(0,\cdots,0,1,0,\cdots,0)$, where $1$ is on the $j$-th component. For any $a\in\mathbb{B}_n\backslash \{0\}$, choose an unitary transformation $U$ such that $Ua=|a|e_1$. Denote by $a^j=U^*(|a|e_j)$, $j=2,\cdots,n$.
	
	The following two lemmas derives from \cite{KhWm}, which are essential in the study of the difference of two weighted composition operators.
	
	\begin{lem}\label{lemma2.4}
		Let $r>0$. There exists $0<t_0<1$ such that
		\[d(z,w)\simeq \frac{1}{|1-\langle z,w\rangle|}\left(|\langle z-w,a\rangle|+\sqrt{1-|a|}\sum_{j=2}^{n}|\langle z-w,a^j\rangle|\right)\]
		for all $a\in\mathbb{B}_n$ with $t_0<|a|<1$, $z\in B(a,r)$ and $w\in\mathbb{B}_n$.
	\end{lem}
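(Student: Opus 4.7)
The plan is to reduce to the standard position $a = |a|e_1$ via a unitary transformation and then apply the explicit Möbius formula for $\sigma_z$. Since the pseudo-hyperbolic distance is invariant under unitary maps and $Ua = |a|e_1$, $Ua^j = |a|e_j$ for $j = 2,\dots,n$, the quantities $|\langle z-w, a\rangle|$ and $|\langle z-w, a^j\rangle|$ become $|a|\,|\tilde z_j - \tilde w_j|$ after the change of variables. Because $|a|>t_0>0$, the factor $|a|$ is harmless, and the inequality to prove reduces to
\[
d(z,w) \simeq \frac{1}{|1-\langle z,w\rangle|}\Bigl(|z_1-w_1| + \sqrt{1-|a|}\sum_{j=2}^n |z_j-w_j|\Bigr), \quad z\in B(|a|e_1, r).
\]

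Next, I would use the numerator identity
\[
|\sigma_z(w)|^2|1-\langle z,w\rangle|^2 = |z - P_z(w)|^2 + (1-|z|^2)|w-P_z(w)|^2,
\]
together with the algebraic computations $|z - P_z(w)| = |\langle z-w, z\rangle|/|z|$ and $|w-P_z(w)|^2 = |w|^2 - |\langle w,z\rangle|^2/|z|^2$. Because $z\in B(|a|e_1,r)$, the standard Bergman-ball estimates give $1-|z|^2 \simeq 1-|a|^2 \simeq 1-|a|$, $|z|\simeq |a| \simeq 1$, and the tangential components $z_j$ ($j\ge 2$) are of order at most $\sqrt{1-|a|}$; hence $\langle z-w, z\rangle = \bar z_1(z_1-w_1) + \sum_{j\ge 2}\bar z_j(z_j-w_j)$ is dominated by $|z_1-w_1|$ up to a term that will be absorbed after squaring and combining with the tangential contribution.

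Then I would estimate the tangential piece: writing $P_{e_1}(w) = w_1 e_1$ (the projection when $z$ is replaced by $a=|a|e_1$), an elementary estimate using $|z-a|\lesssim\sqrt{1-|a|^2}$ shows $|w-P_z(w)|^2 \simeq \sum_{j=2}^n |z_j-w_j|^2$, with constants depending only on $r$. Combining the two pieces,
\[
|\sigma_z(w)|^2 |1-\langle z,w\rangle|^2 \simeq |z_1-w_1|^2 + (1-|a|)\sum_{j=2}^n |z_j-w_j|^2,
\]
and the equivalence $\sqrt{A^2+B^2}\simeq A+B$ together with $\sqrt{\sum|c_j|^2}\simeq \sum|c_j|$ (Cauchy–Schwarz, constants depending on $n$) yields the stated form.

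The main obstacle is the last claim about the tangential piece: one has to justify that the replacement of $P_z$ by $P_{|a|e_1}$ introduces only a bounded multiplicative error. This is delicate because tangential deviations $|z_j|$ for $j\ge 2$ are of the same order $\sqrt{1-|a|}$ as the quantities we want to control, so the cancellations must be tracked carefully. The role of $t_0$ is precisely to ensure $|z|$ is bounded below, so that dividing by $|z|$ in the formulas above is safe; this is the reason one cannot expect the estimate near the origin $a=0$.
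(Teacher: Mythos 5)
Your first step---reducing to $a=|a|e_1$ by a unitary $U$ and using the unitary invariance of $d$ together with $\langle z-w,a\rangle=\langle Uz-Uw,|a|e_1\rangle$ and $|a|>t_0$---is exactly the paper's entire proof; the paper then simply cites the standard-position estimate
\[
d(z,w)\simeq \frac{1}{|1-\langle z,w\rangle|}\Bigl(|z_1-w_1|+\sqrt{1-t}\sum_{j=2}^n|z_j-w_j|\Bigr),\qquad z\in B(te_1,r),\ t_0<t<1,
\]
as Lemma 2.1 of Koo--Wang \cite{KhWm} rather than proving it. You instead attempt to derive this core estimate from the identity $|\sigma_z(w)|^2|1-\langle z,w\rangle|^2=|z-P_z(w)|^2+(1-|z|^2)|w-P_z(w)|^2$, and this is where your argument has a genuine gap. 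The intermediate claim $|w-P_z(w)|^2\simeq\sum_{j=2}^n|z_j-w_j|^2$ is false as a two-sided pointwise equivalence: take $w_j=z_j$ for $j\geq 2$ and $w_1\neq z_1$; then the right-hand side vanishes while a direct computation gives $(w-P_z(w))_j=z_j\bar z_1(z_1-w_1)/|z|^2$ for $j\geq 2$ and $(w-P_z(w))_1=(w_1-z_1)\sum_{k\geq 2}|z_k|^2/|z|^2$, which are nonzero in general. Only the \emph{combined} two-term quantity $|z-P_z(w)|^2+(1-|z|^2)|w-P_z(w)|^2$ is comparable to $|z_1-w_1|^2+(1-t)\sum_{j\geq 2}|z_j-w_j|^2$; the radial piece must absorb the error terms of size $\sqrt{1-t}\,|z_j-w_j|$ and $(1-t)|z_1-w_1|$ coming from the tangential components of $z$. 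You correctly flag this cross-term cancellation as ``the main obstacle'' but do not carry it out, so as written the proposal does not establish the lemma.

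If you want a self-contained argument, the cleanest fix is to keep the identity for the full sum and estimate it in both directions: expand $\langle z-w,z\rangle=\bar z_1(z_1-w_1)+\sum_{j\geq 2}\bar z_j(z_j-w_j)$ and $w-P_z(w)$ explicitly, use $|z_1|\simeq 1$, $|z_j|\lesssim\sqrt{1-t}$ for $j\geq 2$, and verify that every error term is dominated by the target quantity $|z_1-w_1|^2+(1-t)\sum_{j\geq 2}|z_j-w_j|^2$ with constants depending only on $r$ and $n$, possibly after shrinking $1-t_0$. Alternatively, do what the paper does and quote \cite[Lemma 2.1]{KhWm}, in which case your unitary reduction already completes the proof.
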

	
	\begin{proof}
		According to \cite[Lemma 2.1]{KhWm}, we know that
		\[d(z,w)\simeq \frac{1}{|1-\langle z,w\rangle|}\left(|z_1-w_1|+\sqrt{1-t}\sum_{j=2}^n|z_j-w_j|\right)\]
		for all $z\in B(te_1,r)$ with $t_0<t<1$ and $w\in\mathbb{B}_n$.
		
		Let $a\in\mathbb{B}_n$ with $|a|>t_0$, choose an unitary transformation $U$ such that $Ua=|a|e_1$. Note that $z\in B(a,r)$ if and only if $Uz\in B(|a|e_1,r)$. By the unitary invariance of the distance $d$, we get
		\begin{equation*}
			\begin{split}
				d(z,w)&=d(Uz,Uw)\\
				&\simeq \frac{1}{|1-\langle z,w\rangle|}\left(|\langle Uz-Uw,|a|e_1\rangle|+\sqrt{1-|a|}\sum_{j=2}^n|\langle Uz-Uw,|a|e_j|\right)\\
				&\simeq \frac{1}{|1-\langle z,w\rangle|}\left(|\langle z-w,a\rangle|+\sqrt{1-|a|}\sum_{j=2}^{n}|\langle z-w,a^j\rangle|\right).
			\end{split}
		\end{equation*}
		The proof is complete.
	\end{proof}
	
	\begin{lem}{\cite{KhWm}}\label{lemma2.5}
		Let $0<p<\infty$ and $0<s_1<s_2<1$. Then there exists a constant $C=C(s_1,s_2)>0$ such that
		\[|f(z)-f(w)|^p\leq C\frac{d(z,w)^p}{V(E(z,s_2))}\int_{E(z,s_2)}|f(\zeta)|^pdV(\zeta)\]
		for all $z\in\mathbb{B}_n$, $w\in E(z,s_1)$ and $f\in H(\mathbb{B}_n)$.
	\end{lem}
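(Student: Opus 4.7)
The plan is to reduce to the case $z=0$ by a Möbius transformation and then run a one-variable Schwarz lemma argument on a complex line, combined with the subharmonicity of $|g|^p$. Set $g = f\circ\sigma_z$ and $u = \sigma_z(w)$; then $g$ is holomorphic on $\mathbb{B}_n$, $|u| = d(z,w) < s_1$, $g(0) = f(z)$ and $g(u) = f(w)$. Substituting $\zeta = \sigma_z(\eta)$ in the integral on the right-hand side, whose real Jacobian equals $\bigl((1-|z|^2)/|1-\langle\eta,z\rangle|^2\bigr)^{n+1}$, and observing that $|\eta|<s_2$ forces $1-s_2 \leq |1-\langle\eta,z\rangle|\leq 1+s_2$, one sees that this Jacobian is comparable to $(1-|z|^2)^{n+1}$ with constants depending only on $s_2$. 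Combined with $V(E(z,s_2)) \simeq (1-|z|^2)^{n+1}$ from \eqref{equa2.2}, the claimed inequality reduces to the Euclidean statement
\[|g(u) - g(0)|^p \leq C(s_1,s_2)\,|u|^p \int_{|\eta|<s_2} |g(\eta)|^p\, dV(\eta),\qquad |u|\leq s_1,\]
for arbitrary holomorphic $g$ on $\mathbb{B}_n$.

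For this Euclidean version I would fix $u\neq 0$ and consider the one-variable holomorphic function $\phi(\lambda) = g(\lambda u/|u|) - g(0)$ on the unit disk. Since $\phi(0) = 0$, the quotient $\phi(\lambda)/\lambda$ is holomorphic on $\{|\lambda|<1\}$, and the maximum modulus principle on the closed disk $|\lambda|\leq r$ with the intermediate radius $r := (s_1+s_2)/2$ yields
\[\frac{|g(u) - g(0)|}{|u|} \;=\; \left|\frac{\phi(|u|)}{|u|}\right| \;\leq\; \frac{1}{r}\max_{|\lambda|=r}|\phi(\lambda)| \;\leq\; \frac{1}{r}\sup_{|\eta|=r}|g(\eta) - g(0)|.\]
To estimate the supremum on the right in $L^p$ terms, I would apply the quasi-triangle inequality $|g(\eta) - g(0)|^p \lesssim_p |g(\eta)|^p + |g(0)|^p$ together with the subharmonicity of $|g|^p$: choosing $\delta := (s_2-s_1)/2$, the Euclidean ball $B(\eta,\delta)$ sits inside $\{|\eta'|<s_2\}$ for every $|\eta|\leq r$, so the sub-mean value property gives $|g(\eta)|^p \lesssim \delta^{-2n}\int_{|\eta'|<s_2}|g(\eta')|^p\,dV(\eta')$ uniformly in such $\eta$, and the same bound applies to $|g(0)|^p$. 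Raising the displayed inequality to the $p$-th power and combining the three estimates produces the Euclidean bound, which by the reduction implies the lemma.

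The only really delicate step is the bookkeeping in the Möbius reduction, namely verifying that the pseudo-hyperbolic Jacobian and the volume factor $V(E(z,s_2))$ cancel up to constants depending solely on $n$ and $s_2$; this works because the transformed domain $\{|\eta|<s_2\}$ keeps $|1-\langle\eta,z\rangle|$ bounded above and below independently of $z$. Past that, the analytic core of the argument is the standard combination of Schwarz's lemma on a one-dimensional slice and the sub-mean value property for subharmonic functions, both entirely routine.
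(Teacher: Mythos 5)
Your argument is correct, and it is worth noting that the paper itself offers no proof of this statement: Lemma \ref{lemma2.5} is imported verbatim from \cite{KhWm}, so there is nothing internal to compare against. Your reduction is sound: $\sigma_z$ is an involution, so $E(z,s_2)=\sigma_z(\{|\eta|<s_2\})$, the real Jacobian $\bigl((1-|z|^2)/|1-\langle\eta,z\rangle|^2\bigr)^{n+1}$ is indeed pinched between constants times $(1-|z|^2)^{n+1}$ on $|\eta|<s_2$ because $1-s_2\leq|1-\langle\eta,z\rangle|\leq 1+s_2$, and this cancels against $V(E(z,s_2))\simeq(1-|z|^2)^{n+1}$ from \eqref{equa2.2}. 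The Euclidean core also works: the case $u=0$ is trivial, $\phi(\lambda)/\lambda$ is holomorphic since $\phi(0)=0$, the maximum modulus principle at radius $r=(s_1+s_2)/2$ produces the factor $|u|=d(z,w)$, and since $|g|^p$ is plurisubharmonic for every $p>0$ the sub-mean value inequality over Euclidean balls of radius $\delta=(s_2-s_1)/2$ (which stay inside $|\eta'|<s_2$ because $r+\delta=s_2$) gives the $L^p$ bound. Compared with the usual route in \cite{KhWm}, which bounds $|f(z)-f(w)|$ by integrating the gradient along a segment and then controls the gradient by Cauchy estimates plus subharmonicity, your one-variable Schwarz-lemma-on-a-slice trick extracts the factor $d(z,w)$ without ever differentiating $f$, which is slightly cleaner. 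Two cosmetic points: your constant depends on $n$ and $p$ as well as on $s_1,s_2$ (as the cited constant implicitly does), and you should state explicitly that the sup over $|\eta|=r$ of $|g(\eta)-g(0)|$ is attained (or replaced by a supremum) before raising to the power $p$, since for $p<1$ the map $t\mapsto t^p$ is still increasing and the step is harmless.
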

	
	The geometric characterizations of Carleson measures for $A^{p(\cdot)}(\mathbb{B}_n)$ have been obtained in \cite{Da}. We state the results as follows.
	
	\begin{lem}\label{lemma2.06}
		Let $\mu$ be a positive Borel measure on $\mathbb{B}_n$ and $p(\cdot)\in\mathcal{P}^{log}(\mathbb{B}_n)$. Then
		\begin{itemize}
			\item [(i)] $\mu$ is a Carleson measure for $A^{p(\cdot)}(\mathbb{B}_n)$ if and only if 
			\[\sup_{a\in\mathbb{B}_n}\frac{\mu(B(a,r))}{(1-|a|^2)^{n+1}}<\infty\]
			for some (or any) $r>0$.
			\item[(ii)] $\mu$ is a compact Carleson measure for $A^{p(\cdot)}(\mathbb{B}_n)$ if and only if 
			\[\lim_{|a|\to 1}\frac{\mu(B(a,r))}{(1-|a|^2)^{n+1}}=0\]
			for some (or any) $r>0$.	
		\end{itemize}
	\end{lem}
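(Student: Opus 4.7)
The plan is to follow the classical Carleson scheme, adapted to the variable-exponent setting via Lemma \ref{lemma2.2}, and to deduce (ii) from (i) by a two-piece splitting argument. For necessity in (i), I would use the test family $F_{a,N}$: fixing $N>(n+1)/p^-$, set $g_a:=F_{a,N}/\|F_{a,N}\|_{p(\cdot)}$, so that $\|g_a\|_{p(\cdot)}\simeq 1$ by \eqref{equa2.7}. On $B(a,r)$, \eqref{equa2.3} yields $|g_a(z)|\gtrsim (1-|a|^2)^{-(n+1)/p(a)}$, while Lemma \ref{lemma2.1} lets me replace $p(a)$ by $p(z)$ up to a multiplicative constant. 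Applying the Carleson hypothesis to $g_a$ and restricting the modular $\rho_{p(\cdot),\mu}(g_a/C)$ to $B(a,r)$ then forces $\mu(B(a,r))\lesssim (1-|a|^2)^{n+1}$ uniformly in $a$.

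For sufficiency, fix a covering $\{B(a_k,r)\}$ of $\mathbb{B}_n$ with bounded overlap. A preliminary observation is that the geometric hypothesis automatically forces $\mu(\mathbb{B}_n)<\infty$: decomposing $\mathbb{B}_n$ into dyadic shells $\{1-2^{-j}<|z|\leq 1-2^{-j-1}\}$ (each coverable by $\simeq 2^{jn}$ Bergman balls of volume $\simeq 2^{-j(n+1)}$) produces $\mu(\mathbb{B}_n)\lesssim \sum_j 2^{-j}<\infty$. For $f\in A^{p(\cdot)}(\mathbb{B}_n)$ with $\|f\|_{p(\cdot)}\leq 1$, I would combine the sub-mean-value property of $|f|$ on $B(z,r)\subset B(a_k,2r)$ with Lemma \ref{lemma2.2} applied to $B(a_k,2r)$ to get, for $z\in B(a_k,r)$,
\[
|f(z)|^{p(z)}\lesssim \frac{1}{V(B(a_k,2r))}\int_{B(a_k,2r)}|f(w)|^{p(w)}\,dV(w)+1.
\]
Integrating against $d\mu$, invoking $\mu(B(a_k,r))\lesssim V(B(a_k,2r))$ from the hypothesis, and summing over $k$ with finite overlap yield
\[
\rho_{p(\cdot),\mu}(f)\lesssim \rho_{p(\cdot)}(f)+\mu(\mathbb{B}_n)\lesssim 1,
\]
a bound depending only on the Carleson constant. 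A standard rescaling $f\mapsto f/\lambda$ upgrades this to the norm inequality $\|f\|_{p(\cdot),\mu}\lesssim \|f\|_{p(\cdot)}$, and the independence of $r$ follows from a routine ball-covering argument.

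Part (ii) requires only minor modifications: for necessity, $g_a\to 0$ uniformly on compacta as $|a|\to 1$, so compactness of the embedding forces $\|g_a\|_{p(\cdot),\mu}\to 0$ and the computation from (i) yields the vanishing condition. For sufficiency, given $\varepsilon>0$ pick $r_0\in(0,1)$ so that $\mu(B(a,r))/(1-|a|^2)^{n+1}<\varepsilon$ whenever $|a|>r_0$; for a bounded sequence $\{f_j\}\subset A^{p(\cdot)}(\mathbb{B}_n)$ tending to $0$ uniformly on compacta, split $\rho_{p(\cdot),\mu}(f_j)$ into the piece over a compact core (which tends to zero because $f_j\to 0$ uniformly and $\mu$ is finite) and the piece over its complement (controlled by $\varepsilon$ times a uniform constant via the sufficiency argument of (i), now run with the small constant $\varepsilon$ in place of $K$). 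The main technical obstacle throughout is the additive $+1$ in Lemma \ref{lemma2.2}, which after summing over the covering contributes $\sum_k \mu(B(a_k,r))$ and would be fatal if $\mu$ could be infinite; the resolution is precisely the preliminary finite-mass observation above, which is the quiet backbone of the proof, with log-Hölder continuity (Lemma \ref{lemma2.1}) doing the auxiliary work of freely exchanging $p(z)$ and $p(w)$ across a single Bergman ball.
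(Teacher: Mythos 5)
Your proof is essentially correct, but it is worth noting that the paper does not prove this lemma at all: it is quoted verbatim from \cite{Da}, so your self-contained argument is by construction a different (and more informative) route. What you supply is a complete proof built only from tools the paper already has on hand: the test functions $F_{a,N}$ with the norm estimate \eqref{equa2.7} together with \eqref{equa2.3} and Lemma \ref{lemma2.1} for necessity, and the covering Lemma \ref{lemma3.1} plus the Jensen-type inequality of Lemma \ref{lemma2.2} for sufficiency. The two delicate points are both handled correctly. First, the additive $+1$ in Lemma \ref{lemma2.2} contributes $\sum_k\mu(B(a_k,r))\leq N_0\,\mu(\mathbb{B}_n)$ after summation, and your dyadic-shell observation that the geometric hypothesis forces $\mu(\mathbb{B}_n)<\infty$ is exactly what makes this harmless; this is indeed the quiet backbone of the argument and is the step most easily overlooked. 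Second, the passage from the modular bound $\rho_{p(\cdot),\mu}(f)\lesssim 1$ on the unit ball of $A^{p(\cdot)}(\mathbb{B}_n)$ to the norm inequality via $f\mapsto f/\lambda$ is legitimate because $\rho_{p(\cdot),\mu}(f/\lambda)\leq\lambda^{-1}\rho_{p(\cdot),\mu}(f)$ for $\lambda\geq 1$. The only place I would ask you to be slightly more explicit is the sufficiency of (ii): after splitting off the compact core, the residual term coming from the $+1$ is $\sum_k\mu(B(a_k,r))$ summed over the \emph{outer} balls, and its smallness requires $\mu(\{|z|>R'\})\lesssim\varepsilon$, which you get by rerunning your shell decomposition with the vanishing hypothesis; you gesture at this but do not state it. With that one sentence added, the proof is complete and gives the reader something the paper itself omits by deferring to \cite{Da}.
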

	
	We end this section with the following criteria for the compactness of the operators $T_{\mu}^{\beta}$ and $W_{u,\varphi}$, which follows easily from the fact that $A^{p(\cdot)}(\mathbb{B}_n)$ is reflexive when $1<p^-\leq p^+<\infty$ and $\{f_j\}$ converges weakly if and only if it is bounded and converges uniformly on compact subsets of $\mathbb{B}_n$. See \cite[Proposition 3.11]{CcMb} for the case of constant variable.
	
	\begin{lem}\label{lemma2.6}
		Let $p(\cdot)\in \mathcal{P}^{log}(\mathbb{B}_n)$ and $T=T_{\mu}^{\beta}$ or $W_{u,\varphi}$. $T$ is compact on $A^{p(\cdot)}(\mathbb{B}_n)$ if and only if $\|Tf_j\|\to 0$ (or equivalently $\rho_{p(\cdot)}(Tf_j)\to 0$) for any bounded sequence $\{f_j\}$ in $A^{p(\cdot)}(\mathbb{B}_n)$ that converges to 0 uniformly on compact subsets of $\mathbb{B}_n$.
	\end{lem}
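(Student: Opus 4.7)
The plan is to combine the reflexivity of $A^{p(\cdot)}(\mathbb{B}_n)$ with the weak-convergence characterization stated just before the lemma, and then to invoke the classical fact that a bounded operator on a reflexive Banach space is compact if and only if it maps weakly null sequences to norm null sequences. First I would note that the equivalence between $\|Tf_j\|\to 0$ and $\rho_{p(\cdot)}(Tf_j)\to 0$ is immediate from condition (iii) of Lemma \ref{lemma1.1}, so it suffices to establish the characterization in terms of the norm.

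For the forward implication, suppose $T$ is compact and $\{f_j\}$ is a bounded sequence in $A^{p(\cdot)}(\mathbb{B}_n)$ with $f_j\to 0$ uniformly on compact subsets of $\mathbb{B}_n$. By the cited weak-convergence criterion, $f_j\rightharpoonup 0$. Since compact operators carry weakly convergent sequences to norm-convergent ones---the sequence $\{Tf_j\}$ is relatively norm-compact and weakly null, so $0$ is its unique norm cluster point---one obtains $\|Tf_j\|\to 0$.

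For the converse, I would start with an arbitrary bounded sequence $\{g_j\}$ in $A^{p(\cdot)}(\mathbb{B}_n)$ and aim to produce a norm-convergent subsequence of $\{Tg_j\}$. Reflexivity supplies a weakly convergent subsequence $g_{j_k}\rightharpoonup g$. Applying the weak-convergence criterion to the difference $\{g_{j_k}-g\}$ gives that this sequence is norm-bounded and tends to zero uniformly on compact subsets. The hypothesis then yields $\|Tg_{j_k}-Tg\|\to 0$, so every subsequence of $\{Tg_j\}$ has a further norm-convergent subsequence, which means that $T$ is compact.

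I do not anticipate any substantive obstacle; the argument is a direct transcription of the classical scalar-exponent proof (as referenced in \cite{CcMb}) once the three ingredients---reflexivity of $A^{p(\cdot)}(\mathbb{B}_n)$, the weak-convergence criterion, and the modular/norm equivalence---are in hand, and all three have already been assembled in the paper. The only point that requires minor care is ensuring that the weak-convergence characterization is applied to the shifted sequence $g_{j_k}-g$ rather than to $g_{j_k}$ itself, so that uniform convergence on compacta is genuinely to zero and the hypothesis is applicable.
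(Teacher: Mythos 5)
Your argument is correct and is exactly the route the paper intends: it cites reflexivity of $A^{p(\cdot)}(\mathbb{B}_n)$, the characterization of weak convergence via boundedness plus uniform convergence on compacta, and the modular/norm equivalence from Lemma \ref{lemma1.1}, and leaves the standard transcription of \cite[Proposition 3.11]{CcMb} to the reader. Your write-up simply supplies those details, including the correct application of the weak-convergence criterion to the shifted sequence $g_{j_k}-g$.
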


	\section{Carleson measure and Toeplitz operator}
	
	In this section, we investigate the boundedness and compactness of the Toeplitz operator $T_{\mu}^{\beta}$ on $A^{p(\cdot)}(\mathbb{B}_n)$. Before that, we need the following well-known covering lemma of $\mathbb{B}_n$, see \cite[Theorem 2.23]{Zk}.
	
	\begin{lem}\label{lemma3.1}
		There exists a positive integer $N_0$ such that for any $r>0$ we can find a sequence $\{a_k\}$ in $\mathbb{B}_n$ with the following properties:
		\begin{itemize}
			\item[(i)] $\mathbb{B}_n=\cup_kB(a_k,r)$.
			\item[(ii)] The sets $B(a_k,\frac{r}{4})$ are mutually disjoint.
			\item[(iii)] Each point $z\in\mathbb{B}_n$ belongs to at most $N_0$ of the sets $B(a_k,4r)$.
		\end{itemize}
		The sequence $\{a_k\}$ above is called an $r$-lattice in the Bergman metric.
	\end{lem}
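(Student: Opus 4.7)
The plan is to build $\{a_k\}$ as a maximal $r/2$-separated set in the Bergman metric and then derive the three properties in sequence. Since the Bergman metric induces the ordinary topology on $\mathbb{B}_n$, the space $(\mathbb{B}_n,\beta)$ is separable, and a greedy recursion (or Zorn's lemma) yields a countable sequence $\{a_k\}\subset \mathbb{B}_n$ with $\beta(a_k,a_j)\geq r/2$ for all $k\neq j$ that is maximal with respect to this separation.

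Properties (i) and (ii) follow directly. For (ii), any $w\in B(a_k,r/4)\cap B(a_j,r/4)$ with $k\neq j$ would give $\beta(a_k,a_j)<r/2$ by the triangle inequality, a contradiction. For (i), maximality forces each $z\in\mathbb{B}_n$ to satisfy $\beta(z,a_k)<r/2$ for at least one $k$, since otherwise $\{a_k\}\cup\{z\}$ would be a strictly larger $r/2$-separated family; hence $z\in B(a_k,r)$.

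The main step is property (iii), which is a volume-packing argument. Fix $z\in\mathbb{B}_n$ and list all indices $k_1,\dots,k_m$ with $z\in B(a_{k_j},4r)$, equivalently $a_{k_j}\in B(z,4r)$. Then $B(a_{k_j},r/4)\subset B(z,17r/4)$ by the triangle inequality, and these balls are pairwise disjoint by (ii), so
$$\sum_{j=1}^m V\bigl(B(a_{k_j},r/4)\bigr)\leq V\bigl(B(z,17r/4)\bigr).$$
Using \eqref{equa2.2} at each $a_{k_j}$ and at $z$ (recall $B(\cdot,s)=E(\cdot,\tanh s)$) together with \eqref{equa2.3} (which gives $1-|a_{k_j}|^2\simeq 1-|z|^2$ whenever $a_{k_j}\in B(z,4r)$), both sides are comparable to $(1-|z|^2)^{n+1}$ with constants depending only on $n$ and $r$, and cancellation yields $m\leq N_0$ for some integer $N_0=N_0(n,r)$. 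The only delicate point is that these constants in the volume comparison be uniform in $z$, which is precisely what \eqref{equa2.2} and \eqref{equa2.3} provide, so no further work is required.
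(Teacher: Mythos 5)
The paper does not prove this lemma at all; it simply cites \cite[Theorem 2.23]{Zk}. Your argument is the standard proof of that result (maximal $r/2$-separated set, separation gives (ii), maximality gives (i), volume packing gives (iii)), and it is correct in substance: the countability of the maximal separated family, the triangle-inequality steps, the reduction of (iii) to $a_{k_j}\in B(z,4r)$ by symmetry of $\beta$, and the packing inequality $\sum_j V(B(a_{k_j},r/4))\leq V(B(z,17r/4))$ combined with \eqref{equa2.2} and \eqref{equa2.3} are all sound.

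The one point to flag is the quantifier on $N_0$. As stated, the lemma asserts a single $N_0$ working for \emph{every} $r>0$, whereas your volume comparison produces $N_0=N_0(n,r)$: the implicit constants in \eqref{equa2.2} and \eqref{equa2.3} depend on the pseudo-hyperbolic radius $s=\tanh(4r)$ (resp.\ $\tanh(17r/4)$), and the ratio $V(B(z,17r/4))/V(B(a,r/4))$ blows up as $r\to\infty$ --- indeed the number of $r/2$-separated points within Bergman distance $4r$ of a fixed point genuinely grows without bound in $r$, so no $r$-independent $N_0$ exists for all $r>0$. This is why Zhu's Theorem 2.23 restricts to $0<r\leq 1$ (on a bounded range of $r$ your constants are uniform and the stated quantifier order is recovered). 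So your proof establishes the version of the lemma that the paper actually uses --- $r$ is fixed in every application, in Theorems 3.2 and 3.3 --- and the discrepancy lies in the paper's statement rather than in your argument; it would be worth adding one sentence either restricting $r$ to a bounded interval or letting $N_0$ depend on $r$.
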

	
	We are now ready to characterize the boundedness and compactness of the Toeplitz operator $T_{\mu}^{\beta}$ on $A^{p(\cdot)}(\mathbb{B}_n)$.
	
	\begin{thm}\label{theorem3.4}
		Let $\mu$ be a positive Borel measure on $\mathbb{B}_n$, $p(\cdot)\in \mathcal{P}^{log}(\mathbb{B}_n)$ and $\beta\geq 0$. $T_{\mu}^{\beta}$ is bounded on $\mathbb{B}_n$ if and only if 
		\[\sup_{a\in\mathbb{B}_n}\frac{\mu(B(a,r))}{(1-|a|^2)^{n+1+\beta}}<\infty\]
		for some (or any) $r>0$.
	\end{thm}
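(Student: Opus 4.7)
The plan is to prove both directions of the equivalence separately. For sufficiency, I would dominate $T_\mu^\beta$ pointwise by the absolute-value Bergman projection $\hat{P}$ and then invoke its known boundedness on $L^{p(\cdot)}(\mathbb{B}_n,dV)$. Specifically, fix an $r$-lattice $\{a_k\}$ from Lemma \ref{lemma3.1} and start from
\[|T_\mu^\beta f(z)| \le \sum_k \int_{B(a_k,r)} \frac{|f(w)|}{|1-\langle z,w\rangle|^{n+1+\beta}}\, d\mu(w).\]
On each piece I would use \eqref{equa2.3} and \eqref{equa2.4} to replace $1-|w|^2$ and $|1-\langle z,w\rangle|$ by their values at $a_k$, the sub-mean value property to bound $\sup_{w\in B(a_k,r)} |f(w)|$ by an average of $|f|$ over $B(a_k,2r)$, and the Carleson hypothesis to replace $\mu(B(a_k,r))$ by $(1-|a_k|^2)^{n+1+\beta}$. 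Together with \eqref{equa2.2} and the finite-overlap property (iii) of Lemma \ref{lemma3.1}, this yields
\[|T_\mu^\beta f(z)| \lesssim \int_{\mathbb{B}_n} \frac{(1-|w|^2)^\beta\,|f(w)|}{|1-\langle z,w\rangle|^{n+1+\beta}}\, dV(w).\]
Since $1-|w|^2 \le 2|1-\langle z,w\rangle|$, the weighted kernel is dominated by $2^\beta|1-\langle z,w\rangle|^{-(n+1)}$, so that $|T_\mu^\beta f(z)| \lesssim \hat{P}(|f|)(z)$. The boundedness of $\hat{P}$ on $L^{p(\cdot)}(\mathbb{B}_n,dV)$ from \cite[Theorem 1.6]{BdTeZa} then finishes this direction.

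For necessity, the key device is to test $T_\mu^\beta$ on the normalized functions $f_a := F_{a,N}/\|F_{a,N}\|_{p(\cdot)}$ with the \emph{specific} choice $N = n+1+\beta$, chosen so that the phases of $(1-\langle w,a\rangle)^N$ and $(1-\langle a,w\rangle)^{n+1+\beta}$ combine into the modulus $|1-\langle w,a\rangle|^{2(n+1+\beta)}$. By \eqref{equa2.7} one has $\|f_a\|_{p(\cdot)} \simeq 1$ uniformly in $a$, and a direct computation gives
\[T_\mu^\beta f_a(a) = \|F_{a,N}\|_{p(\cdot)}^{-1}\int_{\mathbb{B}_n} \frac{d\mu(w)}{|1-\langle w,a\rangle|^{2(n+1+\beta)}},\]
which is a non-negative real number. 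Restricting the integral to $B(a,r)$ and applying \eqref{equa2.3} together with \eqref{equa2.7} yields the lower bound
\[T_\mu^\beta f_a(a) \gtrsim \frac{\mu(B(a,r))}{(1-|a|^2)^{(n+1)/p(a)+n+1+\beta}}.\]
On the other hand, Lemma \ref{lemma2.3} applied to $T_\mu^\beta f_a \in A^{p(\cdot)}(\mathbb{B}_n)$, together with the assumed operator-norm bound, gives
\[|T_\mu^\beta f_a(a)| \lesssim \frac{\|T_\mu^\beta f_a\|_{p(\cdot)}}{(1-|a|^2)^{(n+1)/p(a)}} \lesssim \frac{1}{(1-|a|^2)^{(n+1)/p(a)}}.\]
Comparing the two estimates produces $\mu(B(a,r)) \lesssim (1-|a|^2)^{n+1+\beta}$ uniformly in $a$, as required. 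The equivalence of the condition for different $r$ follows from a standard covering argument.

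The only genuinely subtle step is fixing $N=n+1+\beta$ in the test functions. For any other choice of $N$, the integrand against $d\mu(w)$ carries a phase factor $e^{i(N-n-1-\beta)\arg(1-\langle w,a\rangle)}$, and the part of the integral over $\mathbb{B}_n\setminus B(a,r)$ could cancel the contribution from $B(a,r)$, preventing the desired lower bound. The particular choice $N=n+1+\beta$ exactly eliminates this phase and produces a genuinely positive integrand, which is what allows the Carleson localization to take effect. The sufficiency direction, once the pointwise domination is in place, is routine bookkeeping.
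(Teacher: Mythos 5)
Your proposal is correct, and the two halves compare differently with the paper. The sufficiency argument is essentially the paper's own: lattice decomposition, sub-mean value property, the Carleson hypothesis on each $B(a_k,r)$, finite overlap, and domination by $\hat{P}(|f|)$ followed by \cite[Theorem 1.6]{BdTeZa}. The necessity, however, takes a genuinely different route. The paper pairs $T_{\mu}^{\beta}f_{a,N+\beta}$ against a second test function $g_{a,\beta}(z)=h_{a,\beta}(z)(1-|z|^2)^{\beta}$ via the duality $(A^{p(\cdot)})^{*}\simeq A^{p'(\cdot)}$, applies Fubini and the reproducing formula of $A^{1}_{\beta}(\mathbb{B}_n)$ to collapse the inner integral, and obtains the positive integrand $|1-\langle w,a\rangle|^{-2(N+\beta)}$ because the conjugate kills the phase for \emph{any} $N\geq n+1$. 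You instead evaluate $T_{\mu}^{\beta}f_a$ at the single point $a$ and force positivity of the integrand by the specific choice $N=n+1+\beta$ (your observation that this is the one exponent for which $(1-\langle w,a\rangle)^{-N}(1-\langle a,w\rangle)^{-(n+1+\beta)}$ has no phase is exactly the right subtlety to flag), then close the argument with the pointwise growth estimate of Lemma \ref{lemma2.3}; the exponents match and yield $\mu(B(a,r))\lesssim(1-|a|^2)^{n+1+\beta}$. Your version is more elementary in that it needs neither the dual-space identification nor the $A^{1}_{\beta}$ reproducing formula, only \eqref{equa2.7} and Lemma \ref{lemma2.3}; the price is the rigidity in $N$, whereas the paper's duality argument is the one that survives (and is reused, via \eqref{equa3.4}) in the compactness Theorem \ref{theorem3.5}, where one wants the flexibility of a family of test functions tending to zero on compacta. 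Note also that $N=n+1+\beta$ need not be an integer, but since $\mathrm{Re}(1-\langle w,a\rangle)>0$ the principal branch makes $F_{a,N}$ and the identity $\overline{(1-\langle w,a\rangle)^{s}}=(1-\langle a,w\rangle)^{s}$ unproblematic, and the paper itself already uses non-integer exponents $N+\beta$, so this is consistent.
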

	
	\begin{proof}
		{\it Necessity}: For any $a\in\mathbb{B}_n$ and fixed $N\geq n+1$, let 
		\[f_{a,N+\beta}(z)=\frac{(1-|a|^2)^{N+\beta-\frac{n+1}{p(a)}}}{(1-\langle z,a\rangle)^{N+\beta}},\quad z\in\mathbb{B}_n,\]
		and
		\[g_{a,\beta}(z)=h_{a,\beta}(z)(1-|z|^2)^{\beta},\quad z\in\mathbb{B}_n,\]
		where 
		\[h_{a,\beta}(z)=\frac{(1-|a|^2)^{N-\frac{n+1}{p'(a)}}}{(1-\langle z,a\rangle)^{N+\beta}}.\]
		By \eqref{equa2.7}, we know that $\sup_{a\in\mathbb{B}_n}\|f_{a,N+\beta}\|_{p(\cdot)}<\infty$ and $\sup_{a\in\mathbb{B}_n}\|g_{a,\beta}\|_{p'(\cdot)}<\infty$. It follows that $h_{a,\beta}\in A_{\beta}^1(\mathbb{B}_n)$.
		
		By the dual theory in $A^{p(\cdot)}(\mathbb{B}_n)$ and the reproducing formula in $A_{\beta}^1(\mathbb{B}_n)$, we use Fubini's Theorem and \eqref{equa2.3} to obtain
		\begin{equation}\label{equa3.4}
			\begin{split}
				\|T_{\mu}^{\beta}f_{a,N+\beta}\|_{p(\cdot)}&\gtrsim \left|\int_{\mathbb{B}_n}(T_{\mu}^{\beta}f_{a,N+\beta})(z)\overline{g_{a,\beta}(z)}dV(z)\right|\\
				&=\int_{\mathbb{B}_n}f_{a,N+\beta}(w)\int_{\mathbb{B}_n}\overline{h_{a,\beta}(z)}\frac{(1-|z|^2)^{\beta}}{(1-\langle z,w\rangle)^{n+1+\beta}}dV(z)d\mu(w)\\
				&=C_{\beta}\int_{\mathbb{B}_n}f_{a,N+\beta}(w)\overline{h_{a,\beta}(w)}d\mu(w)\\
				&\geq C_{\beta}\int_{B(a,r)}\frac{(1-|a|^2)^{2N+\beta-(n+1)}}{|1-\langle a,w\rangle|^{2(N+\beta)}}d\mu(w)\\
				&\simeq \frac{\mu(B(a,r))}{(1-|a|^2)^{n+1+\beta}}.
			\end{split}
		\end{equation}
		Here $\frac{1}{C_{\beta}}=\int_{\mathbb{B}_n}(1-|z|^2)^{\beta}dV(z)$. Consequently, the boundedness of $T_{\mu}^{\beta}$ on $A^{p(\cdot)}(\mathbb{B}_n)$ implies that 
		\[\sup_{a\in\mathbb{B}_n}\frac{\mu(B(a,r))}{(1-|a|^2)^{n+1+\beta}}<\infty.\]
		
		{\it Sufficiency}: Assume $\hat{\mu}_{r,\beta}:=\frac{\mu(B(a,r))}{(1-|a|^2)^{n+1+\beta}}<\infty$ for some $r>0$. Let $\{a_k\}$ be an $r$-lattice in the Bergman metric. For any $f\in A^{p(\cdot)}(\mathbb{B}_n)$, according to Lemma \ref{lemma3.1} and the sub-mean value property of $|f|$, we obtain
		\begin{equation}\label{equa3.5}
			\begin{split}
				&|T_{\mu}^{\beta}f(z)|\\
				&\quad \leq \sum_{k=1}^{\infty}\int_{B(a_k,r)}\frac{|f(w)|}{|1-\langle w,z\rangle|^{n+1+\beta}}d\mu(w)\\
				&\quad \lesssim \sum_{k=1}^{\infty}\int_{B(a_k,r)}\left(\frac{1}{(1-|w|^2)^{n+1}}\int_{B(w,r)}\frac{|f(\zeta)|}{|1-\langle z,\zeta\rangle|^{n+1+\beta}}dV(\zeta)\right)d\mu(w)\\
				&\quad \lesssim \sum_{k=1}^{\infty}\int_{B(a_k,r)}\frac{1}{(1-|w|^2)^{n+1+\beta}}\int_{B(a_k,2r)}\frac{|f(\zeta)|}{|1-\langle z,\zeta\rangle|^{n+1}}dV(\zeta)d\mu(w)\\
				&\quad\lesssim N_0\hat{\mu}_{r,\beta}\int_{\mathbb{B}_n}\frac{|f(\zeta)|}{|1-\langle z,\zeta\rangle|^{n+1}}dV(\zeta)\lesssim P^*(|f|)(z).
			\end{split}
		\end{equation}
		Here, equations \eqref{equa2.3} and \eqref{equa2.4} are used in the above estimation. Consequently, the boundedness of $T_{\mu}^{\beta}$ follows from the boundedness of $\hat{P}$ on $L^{p(\cdot)}(\mathbb{B}_n,dV)$.
	\end{proof}
	
	\begin{thm}\label{theorem3.5}
		Let $\mu$ be a positive Borel measure on $\mathbb{B}_n$, $p(\cdot)\in\mathcal{P}^{log}(\mathbb{B}_n)$ and $\beta\geq 0$. $T_{\mu}^{\beta}$ is compact on $\mathbb{B}_n$ if and only if 
		\[\lim_{|a|\to 1}\frac{\mu(B(a,r))}{(1-|a|^2)^{n+1+\beta}}=0\]
		for some (or any) $r>0$.
	\end{thm}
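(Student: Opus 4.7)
The plan is to mirror the proof of Theorem~\ref{theorem3.4}, replacing the uniform boundedness of $\hat\mu_{r,\beta}(a):=\mu(B(a,r))/(1-|a|^2)^{n+1+\beta}$ by its vanishing at the boundary, and then applying the compactness criterion in Lemma~\ref{lemma2.6}.

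For the \emph{necessity}, I would pick any sequence $\{a_k\}\subset\mathbb{B}_n$ with $|a_k|\to 1$ and test against the same functions $f_{a_k,N+\beta}$ constructed in the proof of Theorem~\ref{theorem3.4}, with $N\geq n+1$. By \eqref{equa2.7} they are bounded in $A^{p(\cdot)}(\mathbb{B}_n)$, and since
\[
|f_{a_k,N+\beta}(z)|\lesssim\frac{(1-|a_k|^2)^{N+\beta-(n+1)/p(a_k)}}{|1-\langle z,a_k\rangle|^{N+\beta}},
\]
the choice $N\geq n+1$ forces the exponent in the numerator to be positive (using $p^->1$), so $f_{a_k,N+\beta}\to 0$ uniformly on compacta. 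Lemma~\ref{lemma2.6} gives $\|T_\mu^\beta f_{a_k,N+\beta}\|_{p(\cdot)}\to 0$, and combining this with the lower bound \eqref{equa3.4} yields $\hat\mu_{r,\beta}(a_k)\to 0$.

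For the \emph{sufficiency}, assume the vanishing condition; it implies $M:=\sup_a\hat\mu_{r,\beta}(a)<\infty$, so by Theorem~\ref{theorem3.4} the operator is at least bounded. Fix $\varepsilon>0$ and choose $R\in(0,1)$ such that $\hat\mu_{r,\beta}(a)<\varepsilon$ whenever $|a|>R$. Let $\{a_k\}$ be an $r$-lattice (Lemma~\ref{lemma3.1}) and let $\{f_j\}$ be bounded in $A^{p(\cdot)}(\mathbb{B}_n)$ with $f_j\to 0$ uniformly on compacta. Re-running the pointwise estimate \eqref{equa3.5} and bundling the measure contribution via \eqref{equa2.3} gives
\[
|T_\mu^\beta f_j(z)|\lesssim\sum_k\hat\mu_{r,\beta}(a_k)\int_{B(a_k,2r)}\frac{|f_j(\zeta)|}{|1-\langle z,\zeta\rangle|^{n+1}}\,dV(\zeta).
\]
Split the sum into $|a_k|>R$ and $|a_k|\le R$. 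The first piece is bounded by $\varepsilon\hat P(|f_j|)(z)$ using the finite-overlap property (iii) of Lemma~\ref{lemma3.1}. The second piece involves only finitely many $a_k$, all contained in a fixed compact $K\Subset\mathbb{B}_n$ (since $1-|\zeta|^2\simeq 1-|a_k|^2$ on $B(a_k,2r)$ by \eqref{equa2.3}), so its integrand is dominated by $\|f_j\|_{L^\infty(K)}\,|1-\langle z,\zeta\rangle|^{-(n+1)}\chi_K(\zeta)$.

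Taking $\|\cdot\|_{p(\cdot)}$, using the boundedness of $\hat P$ on $L^{p(\cdot)}(\mathbb{B}_n,dV)$ from \cite{BdTeZa}, and the fact that $\hat P\chi_K\in L^{p(\cdot)}(\mathbb{B}_n,dV)$, I obtain
\[
\|T_\mu^\beta f_j\|_{p(\cdot)}\lesssim\varepsilon\|f_j\|_{p(\cdot)}+C_R\,\|f_j\|_{L^\infty(K)}.
\]
Letting $j\to\infty$ makes the second term vanish, so $\limsup_j\|T_\mu^\beta f_j\|_{p(\cdot)}\lesssim\varepsilon M$, and the arbitrariness of $\varepsilon$ together with Lemma~\ref{lemma2.6} gives compactness. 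The main obstacle I anticipate is executing the split cleanly: one must be careful that the $L^\infty$-smallness of $f_j$ on the compact set $K$ really propagates to an $L^{p(\cdot)}$-smallness of the corresponding piece of $T_\mu^\beta f_j$, which is where the finiteness of $\mu$ (implicit in $M<\infty$) and the $L^{p(\cdot)}$-integrability of $\hat P\chi_K$ enter.
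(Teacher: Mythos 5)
Your proposal is correct and follows essentially the same route as the paper: necessity by testing against $f_{a,N+\beta}$ and invoking Lemma~\ref{lemma2.6} together with the lower bound \eqref{equa3.4}, and sufficiency by splitting the lattice sum from \eqref{equa3.5} into the finitely many lattice points near the origin (handled by uniform convergence on compacta) and the boundary tail (handled by $\varepsilon$-smallness of $\hat\mu_{r,\beta}$ and the boundedness of $\hat P$). The only cosmetic difference is that you estimate the norm directly via $\hat P\chi_K\in L^{p(\cdot)}(\mathbb{B}_n,dV)$, whereas the paper passes to the modular $\rho_{p(\cdot)}$ before concluding with Lemma~\ref{lemma2.6}.
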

	
	\begin{proof}
		{\it Necessity}: Assume $T_{\mu}^{\beta}$ is compact on $A^{p(\cdot)}(\mathbb{B}_n)$. Since $f_{a,N+\beta}$ is bounded in $A^{p(\cdot)}(\mathbb{B}_n)$ and converges to 0 uniformly on compact subsets of $\mathbb{B}_n$, we combine Lemma \ref{lemma2.6} and \eqref{equa3.4} to obtain
		\[\lim_{|a|\to 1}\frac{\mu(B(a,r))}{(1-|a|^2)^{n+1+\beta}}\lesssim \lim_{|a|\to 1}\|T_{\mu}^{\beta}f_{a,N+\beta}\|_{p(\cdot)}=0.\]
		
		{\it Sufficiency}: Let $\{a_k\}$ be an $r$-lattice in the Bergman metric. For any $\varepsilon>0$, choose $k_0\in\mathbb{N}$ such that
		\[\frac{\mu(B(a_k,r))}{(1-|a_k|^2)^{n+1+\beta}}<\varepsilon\]
		whenever $k>k_0$. Let $\{f_j\}$ be any bounded sequence in $A^{p(\cdot)}(\mathbb{B}_n)$ that converges to 0 uniformly on compact subsets of $\mathbb{B}_n$. Through a similar argument as in \eqref{equa3.5}, we obtain
		\[|T_{\mu}^{\beta}f_j(z)|\lesssim \hat{\mu}_{r,\beta}\sum_{k=1}^{k_0}\int_{B(a_k,2r)}\frac{|f_j(\zeta)|}{|1-\langle z,\zeta\rangle|^{n+1}}dV(\zeta)+N_0\varepsilon P^*(|f_j|)(z).\]
		Choose $J\in\mathbb{N}$ such that
		\[\sum_{k=1}^{k_0}\int_{B(a_k,2r)}\frac{|f_j(\zeta)|}{|1-\langle z,\zeta\rangle|^{n+1}}dV(\zeta)\lesssim \sum_{k=1}^{k_0}\int_{B(a_k,2r)}|f_{j}(\zeta)|dV(\zeta)<\varepsilon\]
		whenever $j>J$.
		
		Consequently, for $j>J$, we have
		\[\rho_{p(\cdot)}(T_{\mu}^{\beta}f_j)\lesssim (\hat{\mu}_{r,\beta}+1)^{p^+}\varepsilon+N_0^{p^+}\varepsilon \left(\rho_{p(\cdot)}(P^*|f_j|)\right).\]
		Then by Lemma \ref{lemma2.6}, we conclude that $T_{\mu}^{\beta}$ is compact on $A^{p(\cdot)}(\mathbb{B}_n)$.
	\end{proof}
	
	\section{Difference of weighted composition operators}
	
	In this section, we describe the properties of $C_{u,\varphi}$, and characterize bounded and compact differences $C_{u,\varphi}-C_{v,\psi}$ on $A^{p(\cdot)}(\mathbb{B}_n)$.
	
	\subsection{$C_{u,\varphi}$ on $A^{p(\cdot)}(\mathbb{B}_n)$}
	
	Let $p(\cdot)\in\mathcal{P}^{log}(\mathbb{B}_n)$, $u\in H(\mathbb{B}_n)$ and $\varphi\in S(\mathbb{B}_n)$. We define a function $\omega_{\varphi}$ on $\mathbb{B}_n$ by
	\begin{equation}
		\omega_{\varphi}(z)=\left(\frac{1}{1-|\varphi(z)|^2}\right)^{(n+1)\frac{p(z)-p(\varphi(z))}{p(\varphi(z))}},\quad z\in\mathbb{B}_n,
	\end{equation}
	and define two weighted pull-back measures on $\mathbb{B}_n$ as follows:
	\[\mu_{u,\varphi}(E)=\int_{\varphi^{-1}(E)}|u(z)|^{p(z)}\omega_{\varphi}(z)dV(z);\]
	\[\mu_{u,\varphi}^{(1)}(E)=\int_{\varphi^{-1}(E)}|u(z)|^{p(z)}(\omega_{\varphi}(z)+1)dV(z),\]
	where $E$ is any Borel subset of $\mathbb{B}_n$.
	
	We first present some necessary conditions for the boundedness and compactness of $C_{u,\varphi}$ on $A^{p(\cdot)}(\mathbb{B}_n)$.
	
	\begin{prop}\label{proposition4.1}
		Suppose $p(\cdot)\in\mathcal{P}^{log}(\mathbb{B}_n)$, $u\in H(\mathbb{B}_n)$ and $\varphi\in S(\mathbb{B}_n)$. If $C_{u,\varphi}$ is bounded on $A^{p(\cdot)}(\mathbb{B}_n)$, then $u\in A^{p(\cdot)}(\mathbb{B}_n)$ and 
		\begin{itemize}
			\item[(i)]
			\begin{equation}\label{equa4.1}
				\sup_{z\in\mathbb{B}_n}|u(z)|\frac{(1-|z|^2)^{(n+1)/p(z)}}{(1-|\varphi(z)|^2)^{(n+1)/p(\varphi(z))}}<\infty.
			\end{equation}
			\item[(ii)] The measure $\mu_{u,\varphi}$ is a Carleson measure for $A^{p(\cdot)}(\mathbb{B}_n)$.
		\end{itemize}
	\end{prop}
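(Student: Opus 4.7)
My plan has three parts. For the inclusion $u\in A^{p(\cdot)}(\mathbb{B}_n)$, observe that $1\in A^{p(\cdot)}(\mathbb{B}_n)$ and $C_{u,\varphi}(1)=u$, so boundedness of $C_{u,\varphi}$ immediately yields $u\in A^{p(\cdot)}(\mathbb{B}_n)$.

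For (i), I would test $C_{u,\varphi}$ against the normalized reproducing-kernel-type functions
\[
f_{a,N}(w)=\frac{(1-|a|^{2})^{N-(n+1)/p(a)}}{(1-\langle w,a\rangle)^{N}}
\]
for a fixed integer $N\geq n+1$. The norm estimate \eqref{equa2.7} gives $\sup_{a\in\mathbb{B}_n}\|f_{a,N}\|_{p(\cdot)}\lesssim 1$, hence $\|C_{u,\varphi}f_{\varphi(z),N}\|_{p(\cdot)}\lesssim\|C_{u,\varphi}\|$ uniformly in $z$. A direct computation at the point $z$ yields $(C_{u,\varphi}f_{\varphi(z),N})(z)=u(z)(1-|\varphi(z)|^{2})^{-(n+1)/p(\varphi(z))}$, and applying Lemma \ref{lemma2.3} to this function produces \eqref{equa4.1} after rearrangement.

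For (ii), by Lemma \ref{lemma2.06} it suffices to check $\mu_{u,\varphi}(B(a,r))\lesssim(1-|a|^{2})^{n+1}$ uniformly in $a$ for some $r>0$. Using the same test family, the boundedness of $C_{u,\varphi}$ combined with Lemma \ref{lemma1.1} gives $\rho_{p(\cdot)}(C_{u,\varphi}f_{a,N})\lesssim 1$ uniformly in $a$. I would then restrict this integral to $\varphi^{-1}(B(a,r))$; on that set \eqref{equa2.3} yields $|1-\langle\varphi(z),a\rangle|\simeq 1-|a|^{2}\simeq 1-|\varphi(z)|^{2}$, so $|f_{a,N}(\varphi(z))|^{p(z)}\simeq(1-|a|^{2})^{-(n+1)p(z)/p(a)}$. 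Combined with the algebraic identity
\[
(1-|\varphi(z)|^{2})^{-(n+1)p(z)/p(\varphi(z))}=(1-|\varphi(z)|^{2})^{-(n+1)}\,\omega_{\varphi}(z),
\]
this should identify the restricted integral with a constant multiple of $(1-|a|^{2})^{-(n+1)}\mu_{u,\varphi}(B(a,r))$, and the uniform boundedness of the full integral then forces the Carleson bound.

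The main technical obstacle is justifying the exponent swap $(1-|a|^{2})^{-p(z)/p(a)}\simeq(1-|\varphi(z)|^{2})^{-p(z)/p(\varphi(z))}$ on $\varphi^{-1}(B(a,r))$. The bases are comparable by \eqref{equa2.3}, but the exponents differ by a factor proportional to $p(a)-p(\varphi(z))$, so the equivalence reduces to $(1-|a|^{2})^{|p(a)-p(\varphi(z))|}\lesssim 1$. This is exactly the log-H\"older machinery developed in the proof of Lemma \ref{lemma2.1}, which converts log-H\"older continuity of $p(\cdot)$ into such a bounded power via the estimate $|a-\varphi(z)|\lesssim|1-\langle a,\varphi(z)\rangle|^{1/2}$ on the Bergman ball.
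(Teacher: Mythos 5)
Your proposal is correct and follows essentially the same route as the paper: $u=C_{u,\varphi}1$, testing against the family $f_{a,N}$ with the pointwise bound of Lemma \ref{lemma2.3} for (i), and restricting the modular $\rho_{p(\cdot)}(C_{u,\varphi}f_{a,N})$ to $\varphi^{-1}(B(a,r))$ together with \eqref{equa2.3} and the log-H\"older exponent comparison of Lemma \ref{lemma2.1} for (ii). The ``technical obstacle'' you flag is exactly the step the paper disposes of by citing \eqref{equa2.3} and Lemma \ref{lemma2.1} in \eqref{equa4.4}, and your justification of it is the right one.
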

	
	\begin{proof}
		If $C_{u,\varphi}$ is bounded on $A^{p(\cdot)}(\mathbb{B}_n)$, clearly, $u=C_{u,\varphi}1\in A^{p(\cdot)}(\mathbb{B}_n)$. For any $a\in\mathbb{B}_n$ and fixed $N\geq n+1$, recall that
		\[f_{a,N}(z)=\frac{(1-|a|^2)^{N-\frac{n+1}{p(a)}}}{(1-\langle z,a\rangle)^{N}},\quad z\in\mathbb{B}_n.\]
		The boundedness of $C_{u,\varphi}$ implies that
		\[\sup_{a\in\mathbb{B}_n}\|C_{u,\varphi}f_{a,N}\|_{p(\cdot)}\leq C \sup_{a\in\mathbb{B}_n}\|f_{a,N}\|_{p(\cdot)}<\infty.\]
		On the other hand, for any $z\in\mathbb{B}_n$, by Lemma \ref{lemma2.3}, we obtain
		\begin{equation}\label{equa4.3}
			\begin{split}
				\|C_{u,\varphi}f_{\varphi(z),N}\|_{p(\cdot)}&\gtrsim (1-|z|^2)^{(n+1)/p(z)}\left|(C_{u,\varphi}f_{\varphi(z),N})(z)\right|\\
				&=|u(z)|\frac{(1-|z|^2)^{(n+1)/p(z)}}{(1-|\varphi(z)|^2)^{(n+1)/p(\varphi(z))}}.
			\end{split}
		\end{equation}
		Therefore,
		\[\sup_{z\in\mathbb{B}_n}|u(z)|\frac{(1-|z|^2)^{(n+1)/p(z)}}{(1-|\varphi(z)|^2)^{(n+1)/p(\varphi(z))}}<\infty.\]
		
		Moreover, condition (i) in Lemma \ref{lemma1.1} tells us that
		\[\sup_{a\in\mathbb{B}_n}\rho_{p(\cdot)}(C_{u,\varphi}f_{a,N})\leq \left(C\sup_{a\in\mathbb{B}_n}\|f_{a,N}\|_{p(\cdot)}+1\right)^{p^+}<\infty.\]
		For any $r>0$, by \eqref{equa2.3} and Lemma \ref{lemma2.1}, we obtain
		\begin{equation}\label{equa4.4}
			\begin{split}
				\rho_{p(\cdot)}(C_{u,\varphi}f_{a,N})&\geq\int_{\varphi^{-1}(B(a,r))}|u(z)f_{a,N}(\varphi(z))|^{p(z)}dV(z)\\
				&\gtrsim\int_{\varphi^{-1}(B(a,r))}|u(z)|^{p(z)}\left(\frac{1}{1-|a|^2}\right)^{\frac{(n+1)p(z)}{p(a)}}dV(z)\\
				&\simeq \frac{\int_{\varphi^{-1}(B(a,r))}|u(z)|^{p(z)}\omega_{\varphi}(z)dV(z)}{(1-|a|^2)^{n+1}}.
			\end{split}
		\end{equation}
		Therefore, 
		\[\sup_{a\in\mathbb{B}_n}\frac{\int_{\varphi^{-1}(B(a,r))}|u(z)|^{p(z)}\omega_{\varphi}(z)dV(z)}{(1-|a|^2)^{n+1}}<\infty.\]
		This shows that $\mu_{u,\varphi}$ is a Carleson measure for $A^{p(\cdot)}(\mathbb{B}_n)$ by Lemma \ref{lemma2.06}.
	\end{proof}
	
	\begin{ex}\label{example4.2}
		Let $u_0(z)=z_1+z_2+\cdots+z_n$ and $\varphi_0(z)=-z$, it is obvious that $C_{u_0,\varphi_0}$ is bounded on $A^{p}(\mathbb{B}_n)$ for any $0<p<\infty$. However, if 
		\[p_0(z)=(n+3)+{\rm Re} (z_1+\cdots+z_n),\quad z=(z_1,\cdots,z_n)\in\mathbb{B}_n,\]
		then $p_0(\cdot)\in\mathcal{P}^{log}(\mathbb{B}_n)$ and
		\begin{equation*}
			\begin{split}
				&\sup_{z\in\mathbb{B}_n}|u(z)|\frac{(1-|z|^2)^{(n+1)/p(z)}}{(1-|\varphi(z)|^2)^{(n+1)/p(\varphi(z))}}\\
				&\quad \geq\lim_{\substack{x_j\to\frac{1}{\sqrt{n}}, y_j\to 0\\ 1\leq j\leq n}}|x_1+\cdots+x_n|(1-x_1^2-\cdots-x_n^2)^{\frac{n+1}{n+3+\sqrt{n}}-\frac{n+1}{n+3-\sqrt{n}}}=\infty.
			\end{split}
		\end{equation*}
		This shows that $C_{u_0,\varphi_0}$ is unbounded on $A^{p_0(\cdot)}(\mathbb{B}_n)$.
	\end{ex}
	
	\begin{prop}\label{proposition4.2}
		Suppose $p(\cdot)\in\mathcal{P}^{log}(\mathbb{B}_n)$, $u\in H(\mathbb{B}_n)$ and $\varphi\in S(\mathbb{B}_n)$. If $C_{u,\varphi}$ is compact on $A^{p(\cdot)}(\mathbb{B}_n)$, then
		\begin{itemize}
			\item [(i)]
			\[\lim_{|\varphi(z)|\to 1}|u(z)|^{p(z)}\frac{(1-|z|^2)^{(n+1)/p(z)}}{(1-|\varphi(z)|^2)^{(n+1)/p(\varphi(z))}}=0.\]
			\item[(ii)] The measure $\mu_{u,\varphi}$ is a compact Carleson measure for $A^{p(\cdot)}(\mathbb{B}_n)$.
		\end{itemize}
	\end{prop}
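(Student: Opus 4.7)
The plan is to mirror the necessity proof of Proposition \ref{proposition4.1} almost verbatim, using the compactness criterion from Lemma \ref{lemma2.6} in place of the uniform boundedness used there. The key observation is that the very same test family $f_{a,N}$ (with $N\geq n+1$ fixed) serves both purposes: by \eqref{equa2.7} we have $\sup_{a}\|f_{a,N}\|_{p(\cdot)}<\infty$, and a direct pointwise inspection shows that $f_{a,N}\to 0$ uniformly on every compact subset of $\mathbb{B}_n$ as $|a|\to 1$ (since the numerator vanishes while the denominator stays bounded away from zero on compacta). Hence $\{f_{a,N}\}_{|a|\to 1}$ is a bounded family in $A^{p(\cdot)}(\mathbb{B}_n)$ converging to $0$ on compact sets, so Lemma \ref{lemma2.6} forces
\[
\lim_{|a|\to 1}\|C_{u,\varphi}f_{a,N}\|_{p(\cdot)}=0,\qquad \lim_{|a|\to 1}\rho_{p(\cdot)}(C_{u,\varphi}f_{a,N})=0.
\]

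For part (i), I would specialize $a=\varphi(z)$ and rerun the computation leading to \eqref{equa4.3}: Lemma \ref{lemma2.3} applied to $C_{u,\varphi}f_{\varphi(z),N}$ at the point $z$ yields
\[
|u(z)|\,\frac{(1-|z|^2)^{(n+1)/p(z)}}{(1-|\varphi(z)|^2)^{(n+1)/p(\varphi(z))}}\;\lesssim\;\bigl\|C_{u,\varphi}f_{\varphi(z),N}\bigr\|_{p(\cdot)},
\]
whose right-hand side tends to $0$ as $|\varphi(z)|\to 1$. To pass to the stated $|u(z)|^{p(z)}$ form, raise both sides to the $p(z)$-th power and use $p^-\leq p(z)\leq p^+$ together with the already established boundedness of the base quantity (Proposition \ref{proposition4.1}(i)); the resulting limit is still zero.

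For part (ii), I would specialize $a$ arbitrary and invoke the lower bound computed in \eqref{equa4.4}, namely
\[
\rho_{p(\cdot)}(C_{u,\varphi}f_{a,N})\;\gtrsim\;\frac{\mu_{u,\varphi}(B(a,r))}{(1-|a|^2)^{n+1}}.
\]
Since the left side tends to $0$ as $|a|\to 1$ by the first step, we obtain $\lim_{|a|\to 1}\mu_{u,\varphi}(B(a,r))/(1-|a|^2)^{n+1}=0$, which by Lemma \ref{lemma2.06}(ii) is precisely the statement that $\mu_{u,\varphi}$ is a compact Carleson measure for $A^{p(\cdot)}(\mathbb{B}_n)$. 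The whole argument is a clean compactness-from-vanishing-test-functions play, and the only delicate point is verifying that the $f_{a,N}$ indeed vanish uniformly on compact subsets — which is immediate but should be stated; no genuine technical obstacle arises because all the heavy analytic estimates were already performed in the proof of Proposition \ref{proposition4.1}.
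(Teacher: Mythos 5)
Your proposal is correct and follows essentially the same route as the paper: test against the normalized kernels $f_{a,N}$, invoke Lemma \ref{lemma2.6} to get $\|C_{u,\varphi}f_{a,N}\|_{p(\cdot)}\to 0$ and $\rho_{p(\cdot)}(C_{u,\varphi}f_{a,N})\to 0$ as $|a|\to 1$, then feed this into \eqref{equa4.3} for (i) and \eqref{equa4.4} plus Lemma \ref{lemma2.06}(ii) for (ii). Your extra step of raising to the $p(z)$-th power is in fact more careful than the paper, whose proof of (i) only concludes the limit for $|u(z)|$ to the first power (the exponent $p(z)$ in the statement of (i) appears to be a typo there), so no further comment is needed.
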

	
	\begin{proof}
		Since $f_{a,N}$ is bounded in $A^{p(\cdot)}(\mathbb{B}_n)$ and converges to 0 uniformly on compact subsets of $\mathbb{B}_n$ as $|a|\to 1$, by Lemma \ref{lemma2.6}, the compactness of $C_{u,\varphi}$ implies that
		\begin{equation*}
			\lim_{|a|\to 1}\|C_{u,\varphi}f_{a,N}\|_{p(\cdot)}=\lim_{|a|\to\infty}\rho_{p(\cdot)}(C_{u,\varphi}f_{a,N})=0.
		\end{equation*}
		This, together with \eqref{equa4.3}, shows that
		\[\lim_{|\varphi(z)|\to 1}|u(z)|\frac{(1-|z|^2)^{(n+1)/p(z)}}{(1-|\varphi(z)|^2)^{(n+1)/p(\varphi(z))}}=0.\]
		And by \eqref{equa4.4}, we get
		\begin{equation*}
			\lim_{|a|\to 1}\frac{\int_{\varphi^{-1}(B(a,r))}|u(z)|^{p(z)}\omega_{\varphi}(z)dV(z)}{(1-|a|^2)^{n+1}}=0,
		\end{equation*}
		which shows that $\mu_{u,\varphi}$ is a compact Carleson measure for $A^{p(\cdot)}(\mathbb{B}_n)$ by Lemma \ref{lemma2.06}.
	\end{proof}
	
	Now we give a sufficient condition for the boundedness of $C_{u,\varphi}$ on $A^{p(\cdot)}(\mathbb{B}_n)$.
	
	\begin{prop}\label{proposition4.3}
		Let $p(\cdot)\in\mathcal{P}^{log}(\mathbb{B}_n)$, $u\in A^{p(\cdot)}(\mathbb{B}_n)$ and $\varphi\in S(\mathbb{B}_n)$. If $\mu_{u,\varphi}^{(1)}$ is a Carleson measure for $A^{p(\cdot)}(\mathbb{B}_n)$, then $C_{u,\varphi}$ is bounded on $A^{p(\cdot)}(\mathbb{B}_n)$.
	\end{prop}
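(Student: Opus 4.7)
The plan is to bound the modular $\rho_{p(\cdot)}(C_{u,\varphi}f)$ uniformly for $f\in A^{p(\cdot)}(\mathbb{B}_n)$ with $\|f\|_{p(\cdot)}\le 1$, and then convert this into the norm inequality $\|C_{u,\varphi}f\|_{p(\cdot)}\lesssim \|f\|_{p(\cdot)}$ via Lemma~\ref{lemma1.1} and homogeneity. Since
\[
|C_{u,\varphi}f(z)|^{p(z)}=|u(z)|^{p(z)}|f(\varphi(z))|^{p(z)},
\]
the obstacle is the mismatch between the integration exponent $p(z)$ and the ``natural'' exponent $p(\varphi(z))$ at the point where $f$ is evaluated; this is precisely the mismatch the weight $\omega_{\varphi}$ was designed to absorb.

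To exploit this, I would factor
\[
|f(\varphi(z))|^{p(z)}=|f(\varphi(z))|^{p(\varphi(z))}\cdot|f(\varphi(z))|^{p(z)-p(\varphi(z))}
\]
and estimate the last factor pointwise. Lemma~\ref{lemma2.3} applied at $\varphi(z)$ gives $|f(\varphi(z))|\lesssim (1-|\varphi(z)|^2)^{-(n+1)/p(\varphi(z))}$, so raising this to the non-negative part of $p(z)-p(\varphi(z))$ produces a constant multiple of $\omega_{\varphi}(z)$. A short case split on the sign of $p(z)-p(\varphi(z))$ and on whether $|f(\varphi(z))|\ge 1$ or $<1$ (in the latter case $|f(\varphi(z))|^{p(z)}\le 1$ outright) combines into the single pointwise bound
\[
|f(\varphi(z))|^{p(z)}\lesssim |f(\varphi(z))|^{p(\varphi(z))}\bigl(\omega_{\varphi}(z)+1\bigr)+1,
\]
with implicit constant depending only on $p^{+}$ and $p^{-}$. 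Multiplying through by $|u(z)|^{p(z)}$, integrating against $dV$, and invoking the pushforward definition of $\mu_{u,\varphi}^{(1)}$ for the first integral then produces
\[
\rho_{p(\cdot)}(C_{u,\varphi}f)\lesssim \int_{\mathbb{B}_n}|f(w)|^{p(w)}\,d\mu_{u,\varphi}^{(1)}(w)+\rho_{p(\cdot)}(u).
\]

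The second term is finite by the hypothesis $u\in A^{p(\cdot)}(\mathbb{B}_n)$. For the first, the Carleson assumption on $\mu_{u,\varphi}^{(1)}$ gives $\|f\|_{p(\cdot),\mu_{u,\varphi}^{(1)}}\lesssim \|f\|_{p(\cdot)}\le 1$, and a rescaling $f\mapsto f/C$ combined with Lemma~\ref{lemma1.1}(i) upgrades this norm bound to the modular bound $\int_{\mathbb{B}_n}|f|^{p(\cdot)}\,d\mu_{u,\varphi}^{(1)}\lesssim 1$. Hence $\rho_{p(\cdot)}(C_{u,\varphi}f)$ stays uniformly bounded on the closed unit ball of $A^{p(\cdot)}(\mathbb{B}_n)$, after which Lemma~\ref{lemma1.1}(ii) together with homogeneity delivers the claimed inequality. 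The main obstacle is the case analysis producing the displayed pointwise estimate: one has to verify, in each configuration of $\mathrm{sgn}(p(z)-p(\varphi(z)))$ and size of $|f\circ\varphi|$, that the factor $|f(\varphi(z))|^{p(z)-p(\varphi(z))}$ is dominated by $\omega_{\varphi}(z)+1$, which crucially uses the fact that Lemma~\ref{lemma2.3} provides precisely the exponent $p(\varphi(z))$ in the denominator. Once that inequality is in place, the rest is just change of variables and the definition of a Carleson measure.
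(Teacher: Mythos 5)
Your argument is correct and is essentially the paper's own proof: the same factorization $|f(\varphi(z))|^{p(z)}=|f(\varphi(z))|^{p(\varphi(z))}|f(\varphi(z))|^{p(z)-p(\varphi(z))}$ with Lemma \ref{lemma2.3} and the case split on the size of $|f\circ\varphi|$ yields the pointwise bound by $|f(\varphi(z))|^{p(\varphi(z))}(\omega_{\varphi}(z)+1)+1$, after which integration identifies the main term as $\rho_{p(\cdot),\mu_{u,\varphi}^{(1)}}(f)$, controlled by the Carleson hypothesis and Lemma \ref{lemma1.1}(i), and Lemma \ref{lemma1.1}(ii) plus scaling finishes. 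No gaps.
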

	
	\begin{proof}
		Assume $\mu_{u,\varphi}^{(1)}$ is a Carleson measure for $A^{p(\cdot)}(\mathbb{B}_n)$, then there exists a constant $C>0$ such that $\|f\|_{p(\cdot),\mu_{u,\varphi}^{(1)}}\leq C\|f\|_{p(\cdot)}$ for any $f\in A^{p(\cdot)}(\mathbb{B}_n)$. Let $f\in A^{p(\cdot)}(\mathbb{B}_n)$ with $\|f\|_{p(\cdot)}=1$. By Lemma \ref{lemma2.3} and condition (i) in Lemma \ref{lemma1.1}, we obtain
		\begin{equation*}
			\begin{split}
				\rho_{p(\cdot)}(C_{u,\varphi}f)&=\int_{\mathbb{B}_n}|u(z)|^{p(z)}|f(\varphi(z))|^{p(z)}dV(z)\\
				&\leq \int_{\mathbb{B}_n}|u(z)|^{p(z)}dV(z)+\int_{\mathbb{B}_n}|f(\varphi(z))|^{p(\varphi(z))}|u(z)|^{p(z)}(\omega_{\varphi}(z)+1)dV(z)\\
				&=\rho_{p(\cdot)}(u)+\rho_{p(\cdot),\mu_{u,\varphi}^{(1)}}(f)\\
				&\leq \rho_{p(\cdot)}(u)+(C+1)^{p^+}.
			\end{split}
		\end{equation*}
		Then by condition (ii) in Lemma \ref{lemma1.1}, we obtain
		\[\|C_{u,\varphi}f\|_{p(\cdot)}\leq \rho_{p(\cdot)}(C_{u,\varphi}f)+1\leq \rho_{p(\cdot)}(u)+(C+1)^{p^+}+1.\]
		For a general $f\in A^{p(\cdot)}(\mathbb{B}_n)$, considering $\frac{f}{\|f\|_{p(\cdot)}}$, a routine scaling argument yields that 
		\[\|C_{u,\varphi}f\|_{p(\cdot)}\leq \left(\rho_{p(\cdot)}(u)+(C+1)^{p^+}+1\right)\|f\|_{p(\cdot)}.\]
		This shows the boundedness of $C_{u,\varphi}$ on $A^{p(\cdot)}(\mathbb{B}_n)$.
	\end{proof}
	
	\begin{thm}\label{theorem4.4}
		Let $p(\cdot)\in\mathcal{P}^{log}(\mathbb{B}_n)$, $u\in H(\mathbb{B}_n)$ and $\varphi\in S(\mathbb{B}_n)$. Suppose $p(z)\geq p(\varphi(z))$ a.e. Then $C_{u,\varphi}$ is bounded (compact, resp.) if and only if $u\in A^{p(\cdot)}(\mathbb{B}_n)$ and $\mu_{u,\varphi}$ is a (compact, resp.) Carleson measure for $A^{p(\cdot)}(\mathbb{B}_n)$.
	\end{thm}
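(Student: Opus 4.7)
The plan is to begin by noting that under the hypothesis $p(z)\ge p(\varphi(z))$ a.e., the exponent $(n+1)(p(z)-p(\varphi(z)))/p(\varphi(z))$ appearing in the definition of $\omega_\varphi$ is non-negative, so $\omega_\varphi(z)\ge 1$ a.e.\ on $\mathbb{B}_n$. Consequently $\omega_\varphi+1\simeq \omega_\varphi$, and the measures $\mu_{u,\varphi}$ and $\mu_{u,\varphi}^{(1)}$ are comparable; in particular one is a (compact) Carleson measure for $A^{p(\cdot)}(\mathbb{B}_n)$ if and only if the other is. This reduces three of the four implications to results already proved: the necessity in the bounded case is Proposition~\ref{proposition4.1}, the necessity in the compact case is Proposition~\ref{proposition4.2}, and the sufficiency in the bounded case is Proposition~\ref{proposition4.3} applied to $\mu_{u,\varphi}^{(1)}\simeq \mu_{u,\varphi}$. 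Only the sufficiency in the compact case requires a fresh argument.

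For this remaining implication I will apply Lemma~\ref{lemma2.6}. Let $\{f_j\}$ be any bounded sequence in $A^{p(\cdot)}(\mathbb{B}_n)$ converging to $0$ uniformly on compact subsets, and set $M=\sup_j\|f_j\|_{p(\cdot)}$. The goal is to show $\rho_{p(\cdot)}(C_{u,\varphi}f_j)\to 0$, and the heart of the argument is the pointwise estimate
\[
|f_j(\varphi(z))|^{p(z)}\lesssim |f_j(\varphi(z))|^{p(\varphi(z))}\,\omega_\varphi(z),\qquad z\in\mathbb{B}_n,
\]
which I will establish by splitting $\mathbb{B}_n$ according to whether $|f_j\circ\varphi|\le 1$ or $|f_j\circ\varphi|>1$. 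On the former set, $p(z)\ge p(\varphi(z))$ together with the base being $\le 1$ gives $|f_j\circ\varphi|^{p(z)}\le |f_j\circ\varphi|^{p(\varphi(z))}$, which in turn is dominated by $|f_j\circ\varphi|^{p(\varphi(z))}\omega_\varphi(z)$ since $\omega_\varphi\ge 1$. On the latter set I factor $|f_j\circ\varphi|^{p(z)}=|f_j\circ\varphi|^{p(\varphi(z))}\,|f_j\circ\varphi|^{p(z)-p(\varphi(z))}$, apply Lemma~\ref{lemma2.3} to the second factor, and use $p^+<\infty$ and $M<\infty$ to absorb the resulting constants, obtaining $|f_j\circ\varphi|^{p(z)-p(\varphi(z))}\lesssim \omega_\varphi(z)$.

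Multiplying the displayed estimate by $|u(z)|^{p(z)}$ and integrating over $\mathbb{B}_n$ yields $\rho_{p(\cdot)}(C_{u,\varphi}f_j)\lesssim \rho_{p(\cdot),\mu_{u,\varphi}}(f_j)$. Since $\mu_{u,\varphi}$ is assumed to be a compact Carleson measure for $A^{p(\cdot)}(\mathbb{B}_n)$ and $\{f_j\}$ is bounded in $A^{p(\cdot)}(\mathbb{B}_n)$ and converges to $0$ uniformly on compact subsets, the characterization of compact Carleson measures recalled in Subsection~1.2 gives $\|f_j\|_{p(\cdot),\mu_{u,\varphi}}\to 0$, whence $\rho_{p(\cdot),\mu_{u,\varphi}}(f_j)\to 0$ by Lemma~\ref{lemma1.1}(iii). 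Combining these estimates produces $\rho_{p(\cdot)}(C_{u,\varphi}f_j)\to 0$, and Lemma~\ref{lemma2.6} then delivers the compactness of $C_{u,\varphi}$. The main obstacle is the exponent book-keeping on the set $\{|f_j\circ\varphi|>1\}$, where the pointwise estimate from Lemma~\ref{lemma2.3} must be converted into a factor of exactly $\omega_\varphi(z)$; once one recognises that $\omega_\varphi\ge 1$ is forced by the hypothesis, the whole argument collapses into the single modular comparison $\rho_{p(\cdot)}(C_{u,\varphi}f_j)\lesssim \rho_{p(\cdot),\mu_{u,\varphi}}(f_j)$.
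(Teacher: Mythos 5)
Your proposal is correct and follows essentially the same route as the paper: the three easy implications are delegated to Propositions~\ref{proposition4.1}--\ref{proposition4.3} via the observation $\omega_\varphi\ge 1$, and the compact sufficiency is handled by the pointwise comparison $|f_j(\varphi(z))|^{p(z)}\lesssim|f_j(\varphi(z))|^{p(\varphi(z))}\omega_\varphi(z)$ (which the paper obtains from Lemma~\ref{lemma2.3} exactly as you do, your case split on $|f_j\circ\varphi|\le 1$ versus $>1$ merely making the step explicit), followed by Lemma~\ref{lemma2.6}.
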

	
	\begin{proof}
		The necessity has been proved in Proposition \ref{proposition4.1} and Proposition \ref{proposition4.2}. And the sufficiency for the boundedness part follows from Proposition \ref{proposition4.3} since $\omega_{\varphi}(z)\simeq \omega_{\varphi}(z)+1$ when $p(z)\geq p(\varphi(z))$. It remains to prove the sufficiency for the compactness part.
		
		Assume $\mu_{u,\varphi}$ is a compact Carleson measure for $A^{p(\cdot)}(\mathbb{B}_n)$. Let $\{f_j\}$ be any bounded sequence in $A^{p(\cdot)}(\mathbb{B}_n)$ that converges to 0 uniformly on compact subsets of $\mathbb{B}_n$. By Lemma \ref{lemma2.3}, we obtain
		\begin{equation*}
			\begin{split}
				\rho_{p(\cdot)}(C_{u,\varphi}f_j)&=\int_{\mathbb{B}_n}|u(z)|^{p(z)}|f_j(\varphi(z))|^{p(z)}dV(z)\\
				&\lesssim \int_{\mathbb{B}_n}|f_j(\varphi(z))|^{p(\varphi(z))}|u(z)|^{p(z)}\omega_{\varphi}(z)dV(z)\\
				&=\rho_{p(\cdot),\mu_{u,\varphi}}(f_j)\to 0.
			\end{split}
		\end{equation*}
		Then it follows from Lemma \ref{lemma2.6} that $C_{u,\varphi}$ is compact on $A^{p(\cdot)}(\mathbb{B}_n)$.
	\end{proof}
	
	\subsection{$C_{u,\varphi}-C_{v,\psi}$ on $A^{p(\cdot)}(\mathbb{B}_n)$}
	
	Let $p(\cdot)\in\mathcal{P}^{log}(\mathbb{B}_n)$, $u,v\in H(\mathbb{B}_n)$ and $\varphi,\psi\in S(\mathbb{B}_n)$. We put 
	\[d(z)=d(\varphi(z),\psi(z)),\quad z\in\mathbb{B}_n.\]
	To characterize bounded and compact differences of two weighted composition operators on $A^{p(\cdot)}(\mathbb{B}_n)$, we introduce some weighted pull-back measures on $\mathbb{B}_n$ associated with $\varphi$, $\psi$, $u$, $v$ and $p(\cdot)$ as follows. For any Borel set $E\subset \mathbb{B}_n$, let 
	\[\mu_{u,\varphi,d}(E)=\int_{\varphi^{-1}(E)}|u(z)|^{p(z)}d(z)^{p(z)}\omega_{\varphi}(z)dV(z),\]
	\[\mu_{u,\varphi,d}^{(1)}(E)=\int_{\varphi^{-1}(E)}|u(z)|^{p(z)}\left[d(z)^{p(z)}\omega_{\varphi}(z)+1\right]dV(z),\]
	\[\lambda_{\varphi,\alpha}(E)=\int_{\varphi^{-1}(E)}|u(z)-v(z)|^{p(z)}(1-d(z))^{\alpha}\omega_{\varphi}(z)dV(z),\]
	and
	\[\lambda_{\varphi,\alpha}^{(1)}(E)=\int_{\varphi^{-1}(E)}|u(z)-v(z)|^{p(z)}(1-d(z))^{\alpha}(\omega_{\varphi}(z)+1)dV(z),\]
	where $\omega_{\varphi}$ is the weight function defined in \eqref{equa4.1} and $\alpha\geq (n+1)p^+$. Also, we can define $\mu_{v,\psi,d}$, $\mu_{v,\psi,d}^{(1)}$, $\lambda_{\psi,\alpha}$ and $\lambda_{\psi,\alpha}^{(1)}$ accordingly.
	
	\begin{thm}\label{theorem4.5}
		Let $p(\cdot)\in\mathcal{P}^{log}(\mathbb{B}_n)$, $u,v\in H(\mathbb{B}_n)$ and $\varphi,\psi\in S(\mathbb{B}_n)$. If $C_{u,\varphi}-C_{v,\psi}$ is bounded (compact, resp.) on $A^{p(\cdot)}(\mathbb{B}_n)$, then the measures $\mu_{u,\varphi,d}$, $\mu_{v,\psi,d}$, $\lambda_{\varphi,\alpha}$ and $\lambda_{\psi,\alpha}$ are (compact, resp.) Carleson measures for $A^{p(\cdot)}(\mathbb{B}_n)$.
	\end{thm}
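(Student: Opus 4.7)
The necessity is proved by testing on the family $f_{a,N}$ for some fixed large $N$. By \eqref{equa2.7} these have $\|f_{a,N}\|_{p(\cdot)}\simeq 1$ uniformly, so the boundedness of $C_{u,\varphi}-C_{v,\psi}$, together with Lemma \ref{lemma1.1}(i), yields $\rho_{p(\cdot)}((C_{u,\varphi}-C_{v,\psi})f_{a,N})\lesssim 1$ uniformly in $a\in\mathbb{B}_n$. The bridge from the operator side to the measure side is the pointwise identity
\[|f_{a,N}(\varphi(z))|^{p(z)}\simeq \omega_\varphi(z)/(1-|a|^2)^{n+1}\qquad\text{on }\varphi^{-1}(B(a,r)),\]
which follows from \eqref{equa2.3} and Lemma \ref{lemma2.1}. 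By Lemma \ref{lemma2.06} and the symmetry of $d(z)$ under $(u,\varphi)\leftrightarrow(v,\psi)$, it then suffices to bound each of $\mu_{u,\varphi,d}(B(a,r))$ and $\lambda_{\varphi,\alpha}(B(a,r))$ by a multiple of $(1-|a|^2)^{n+1}$.

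I would split $\varphi^{-1}(B(a,r))$ according to $d(z)$: a close region $R_c=\{z:d(z)<s_0\}$ (for $s_0>0$ fixed small, with $r$ chosen so that $\psi(R_c)\subset B(a,r')$ for some $r'>r$) and a far region $R_f=\{z:d(z)\geq s_0\}$. On $R_c$, Lemma \ref{lemma2.5} furnishes $|f_{a,N}(\varphi(z))-f_{a,N}(\psi(z))|\lesssim d(z)(1-|a|^2)^{-(n+1)/p(a)}$, and combining this with the algebraic identity
\[(u-v)f_{a,N}(\varphi) = [uf_{a,N}(\varphi)-vf_{a,N}(\psi)]+v[f_{a,N}(\psi)-f_{a,N}(\varphi)],\]
the equivalence $\omega_\varphi\simeq\omega_\psi$ on $R_c$ (via Lemma \ref{lemma2.1}), and $(1-d(z))^\alpha\simeq 1$ on $R_c$, yields the $R_c$-contribution to $\lambda_{\varphi,\alpha}(B(a,r))$ as $\lesssim(1-|a|^2)^{n+1}+\mu_{v,\psi,d}(B(a,r'))$. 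On $R_f$, $d(z)^{p(z)}\simeq 1$, so $|u-v|^{p(z)}\lesssim(|u|^{p(z)}+|v|^{p(z)})d(z)^{p(z)}$ bounds the $R_f$-contribution to $\lambda_{\varphi,\alpha}$ by the corresponding contributions to $\mu_{u,\varphi,d}$ and $\mu_{v,\psi,d}$.

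For $\mu_{u,\varphi,d}$ the argument additionally decomposes by the ratio $|uf_{a,N}(\varphi)|:|vf_{a,N}(\psi)|$: on the part where this is $\geq 2$, the reverse triangle inequality gives $|u|^{p(z)}|f_{a,N}(\varphi)|^{p(z)}\lesssim|(C_{u,\varphi}-C_{v,\psi})f_{a,N}|^{p(z)}$ pointwise; on the part where it is $\leq 1/2$, we instead use $|u|^{p(z)}|f_{a,N}(\varphi)|^{p(z)}\lesssim|v|^{p(z)}|f_{a,N}(\psi)|^{p(z)}$ and a covering of $\mathbb{B}_n$ by Bergman balls $\{B(b_k,r)\}$ (Lemma \ref{lemma3.1}) to transfer the integral to $\mu_{v,\psi,d}(B(b_k,r))$ at centers $b_k$ near $\psi(z)$, exploiting the bounded overlap. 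Assembling all estimates over $R_c$ and $R_f$, and invoking the symmetric bounds for $\mu_{v,\psi,d}$ and $\lambda_{\psi,\alpha}$, yields a system of inequalities among the four normalized Carleson quantities that decouples thanks to the hypothesis $\alpha\geq(n+1)p^+$. The main obstacle is precisely the intermediate comparability regime where $|uf_{a,N}(\varphi)|\simeq|vf_{a,N}(\psi)|$: here neither triangle-inequality direction is sharp pointwise, and the covering argument together with the extra weight $(1-d(z))^\alpha$ in $\lambda_{\varphi,\alpha}$ is what closes the scheme. The compactness case follows identically, since $f_{a,N}\to 0$ uniformly on compacta as $|a|\to 1$, by Lemma \ref{lemma2.6}.
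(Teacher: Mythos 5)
Your proposal correctly sets up the standard frame (testing on $f_{a,N}$, passing from norm to modular via Lemma \ref{lemma1.1}, the identity $|f_{a,N}(\varphi(z))|^{p(z)}\simeq \omega_\varphi(z)/(1-|a|^2)^{n+1}$ on $\varphi^{-1}(E(a,\cdot))$, and reducing to Lemma \ref{lemma2.06}), but it has a genuine gap exactly at the point you flag as ``the main obstacle,'' and that point is the entire content of the theorem. From boundedness of the difference you only control $\bigl|u f_{a,N}(\varphi)-v f_{a,N}(\psi)\bigr|$; to show $\mu_{u,\varphi,d}$ is Carleson you must produce the factor $d(z)^{p(z)}$ \emph{attached to} $|u(z)|^{p(z)}\omega_\varphi(z)$. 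Your case analysis never produces it: on the set where $|uf_{a,N}(\varphi)|\geq 2|vf_{a,N}(\psi)|$ you bound $|u|^{p(z)}\omega_\varphi/(1-|a|^2)^{n+1}$ \emph{without} the small factor $d(z)^{p(z)}$ (which would amount to $\mu_{u,\varphi}$ being Carleson --- false in general for a bounded difference); on the complementary set you bound $\mu_{u,\varphi,d}$-type quantities by $\mu_{v,\psi,d}$-type quantities and vice versa. A ``system'' of the form $X\lesssim 1+Y$, $Y\lesssim 1+X$ does not decouple: it is consistent with $X=Y=\infty$, so nothing is proved. Your application of Lemma \ref{lemma2.5} yields $d(z)$ only attached to $f_{a,N}(\varphi)-f_{a,N}(\psi)$, which helps with $\lambda_{\varphi,\alpha}$ on the close region but not with $\mu_{u,\varphi,d}$. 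There is also a mismatch in the far-region bound for $\lambda_{\varphi,\alpha}$: the term $|v(z)|^{p(z)}\omega_\varphi(z)$ integrated over $\varphi^{-1}(E(a,\cdot))\cap\{d(z)\geq s_0\}$ is not dominated by anything involving $\mu_{v,\psi,d}$, since that measure carries the weight $\omega_\psi$ and the pullback under $\psi$, and when $d(z)$ is close to $1$ neither $\omega_\varphi\simeq\omega_\psi$ nor $\psi(z)\in E(a,r'')$ holds.

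The paper closes this gap with a different device: it tests on \emph{two consecutive powers} $f_{a,N}$ and $f_{a,N+1}$. Writing the resulting modular bounds as \eqref{equa4.5}--\eqref{equa4.6}, multiplying \eqref{equa4.5} by the bounded ratio $\frac{1-\langle\varphi(z),a\rangle}{1-\langle\psi(z),a\rangle}$ and subtracting isolates
\[
\int_{\varphi^{-1}(E(a,\frac{s}{2}))}\Bigl|\tfrac{\langle\varphi(z)-\psi(z),a\rangle}{1-\langle\psi(z),a\rangle}\Bigr|^{p(z)}|u(z)|^{p(z)}\omega_\varphi(z)\,dV(z)\lesssim (1-|a|^2)^{n+1},
\]
i.e.\ the component of $\varphi(z)-\psi(z)$ along $a$. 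Repeating this with the shifted centers $b^j=a+s_0\sqrt{1-|a|^2}\,a^j$ controls the orthogonal components, and Lemma \ref{lemma2.4} reassembles these into $d(z)^{p(z)}$, giving the $\mu_{u,\varphi,d}$ bound for $|a|>t_0$ (the case $|a|\leq t_0$ uses only $u-v,\,u\varphi_i-v\psi_i\in A^{p(\cdot)}$). The $\lambda_{\varphi,\alpha}$ bound then follows from \eqref{equa4.5} and the lower bound $\bigl|\frac{1-\langle\varphi(z),a\rangle}{1-\langle\psi(z),a\rangle}\bigr|\gtrsim 1-d(z)^2$, which is where the weight $(1-d(z))^{\alpha}$ with $\alpha\geq Np^+$ actually enters --- not as a case split in $d(z)$. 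Without the two-exponent subtraction and Lemma \ref{lemma2.4} (or an equivalent mechanism), your argument cannot be completed.
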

	
	\begin{proof}
		{\it Boundedness}: For any $a\in\mathbb{B}_n$ and $N\geq n+1$, recall that 
		\[f_{a,N}(z)=\frac{(1-|a|^2)^{N-\frac{n+1}{p(a)}}}{(1-\langle z,a\rangle)^{N}},\quad z\in\mathbb{B}_n.\]
		Assume $C_{u,\varphi}-C_{v,\psi}$ is bounded on $A^{p(\cdot)}(\mathbb{B}_n)$. It follows from condition (i) in Lemma \ref{lemma1.1} that
		\[\sup_{a\in\mathbb{B}_n}\rho_{p(\cdot)}\left((C_{u,\varphi}-C_{v,\psi})f_{a,N}\right)\leq \left(C\sup_{a\in\mathbb{B}_n}\|f_{a,N}\|_{p(\cdot)}+1\right)^{p^+}<\infty.\]
		
		For fixed $s\in (0,1)$, by \eqref{equa2.3} and Lemma \ref{lemma2.1}, we have
		\begin{equation*}
			\begin{split}
				&\rho_{p(\cdot)}\left((C_{u,\varphi}-C_{v,\psi})f_{a,N}\right)\\
				&\quad \gtrsim \int_{\varphi^{-1}(E(a,\frac{s}{2}))}\left|\frac{u(z)}{(1-\langle\varphi(z),a\rangle)^N}-\frac{v(z)}{(1-\langle \psi(z),a\rangle)^{N}}\right|^{p(z)}\\
				&\quad\quad\quad \times(1-|a|^2)^{\left(N-\frac{n+1}{p(a)}\right)p(z)}dV(z)\\
				&\quad \simeq\frac{\int_{\varphi^{-1}(E(a,\frac{s}{2}))}\left|u(z)-v(z)\left(\frac{1-\langle \varphi(z),a\rangle}{1-\langle \psi(z),a\rangle}\right)^N\right|^{p(z)}\omega_{\varphi}(z)dV(z)}{(1-|a|^2)^{n+1}}.
			\end{split}
		\end{equation*}
		Hence,
		\begin{equation}\label{equa4.5}
			\sup_{a\in\mathbb{B}_n}\frac{\int_{\varphi^{-1}(E(a,\frac{s}{2}))}\left|u(z)-v(z)\left(\frac{1-\langle \varphi(z),a\rangle}{1-\langle \psi(z),a\rangle}\right)^N\right|^{p(z)}\omega_{\varphi}(z)dV(z)}{(1-|a|^2)^{n+1}}<\infty.
		\end{equation}
		Similarly,
		\begin{equation}\label{equa4.6}
			\sup_{a\in\mathbb{B}_n}\frac{\int_{\varphi^{-1}(E(a,\frac{s}{2}))}\left|u(z)-v(z)\left(\frac{1-\langle \varphi(z),a\rangle}{1-\langle \psi(z),a\rangle}\right)^{N+1}\right|^{p(z)}\omega_{\varphi}(z)dV(z)}{(1-|a|^2)^{n+1}}<\infty.
		\end{equation}
		Note that $\left|\frac{1-\langle \varphi(z),a\rangle}{1-\langle \psi(z),a\rangle}\right|\lesssim 1$ whenever $\varphi(z)\in E(a,\frac{s}{2})$. It follows from \eqref{equa4.5} that
		\begin{equation}\label{equa4.7}
			\sup_{a\in\mathbb{B}_n}\frac{\int_{\varphi^{-1}(E(a,\frac{s}{2}))}\left|u(z)\frac{1-\langle \varphi(z),a\rangle}{1-\langle\psi(z),a\rangle}-v(z)\left(\frac{1-\langle \varphi(z),a\rangle}{1-\langle \psi(z),a\rangle}\right)^N\right|^{p(z)}\omega_{\varphi}(z)dV(z)}{(1-|a|^2)^{n+1}}<\infty.
		\end{equation}
		Adding \eqref{equa4.6} and \eqref{equa4.7}, and by the triangle inequality, we obtain
		\begin{equation}\label{equa4.8}
			\sup_{a\in\mathbb{B}_n}\int_{\varphi^{-1}(E(a,\frac{s}{2}))}\left|\frac{\langle \varphi(z)-\psi(z),a\rangle}{1-\langle \psi(z),a\rangle}\right|^{p(z)}\frac{|u(z)|^{p(z)}\omega_{\varphi}(z)}{(1-|a|^2)^{n+1}}dV(z)<\infty.
		\end{equation}
		Choose $s_0\in (0,1)$ such that $E(a,\frac{s}{2})\subset E(b,s)$ whenever $d(a,b)<s_0$. Let
		\[b^j=a+s_0\sqrt{1-|a|^2}a^j,\quad j=2,\cdots,n.\]
		Then $d(a,b^j)<s_0$. Applying a similar argument as above and by \eqref{equa2.3} and \eqref{equa2.4}, we obtain
		\begin{equation}\label{equa4.9}
			\begin{split}
				&\sup_{a\in\mathbb{B}_n}\int_{\varphi^{-1}(E(a,\frac{s}{2}))}\left|\frac{\langle \varphi(z)-\psi(z),b^j\rangle}{1-\langle \psi(z),a\rangle}\right|^{p(z)}\frac{|u(z)|^{p(z)}\omega_{\varphi}(z)}{(1-|a|^2)^{n+1}}dV(z)\\
				&\quad \lesssim \sup_{a\in\mathbb{B}_n}\int_{\varphi^{-1}(E(b^j,s))}\left|\frac{\langle \varphi(z)-\psi(z),b^j\rangle}{1-\langle \psi(z),b^j\rangle}\right|^{p(z)}\frac{|u(z)|^{p(z)}\omega_{\varphi}(z)}{(1-|b^j|^2)^{n+1}}dV(z)\\
				&\quad<\infty.
			\end{split}
		\end{equation}
		Combining \eqref{equa4.8} and \eqref{equa4.9}, we obtain
		\begin{equation}\label{equa4.10}
			\sup_{a\in\mathbb{B}_n}\int_{\varphi^{-1}(E(a,\frac{s}{2}))}\left|\frac{\langle \varphi(z)-\psi(z),\sqrt{1-|a|^2}a^j\rangle}{1-\langle \psi(z),a\rangle}\right|^{p(z)}\frac{|u(z)|^{p(z)}\omega_{\varphi}(z)}{(1-|a|^2)^{n+1}}dV(z)<\infty
		\end{equation}
		for $j=2,\cdots,n$.
		
		Therefore, when $t_0<|a|<1$, using \eqref{equa2.4} and Lemma \ref{lemma2.4}, we combine \eqref{equa4.8} and \eqref{equa4.10} to obtain
		\begin{equation}\label{equa4.11}
			\begin{split}
				\sup_{|a|>t_0}\frac{\int_{\varphi^{-1}(E(a,\frac{s}{2}))}|u(z)|^{p(z)}d(z)^{p(z)}\omega_{\varphi}(z)dV(z)}{(1-|a|^2)^{n+1}}<\infty.
			\end{split}
		\end{equation}
		Similarly, 
		\begin{equation}\label{equa4.12}
			\sup_{|a|>t_0}\frac{\int_{\psi^{-1}(E(a,\frac{s}{2}))}|v(z)|^{p(z)}d(z)^{p(z)}\omega_{\psi}(z)dV(z)}{(1-|a|^2)^{n+1}}<\infty.
		\end{equation}
		On the other hand, when $|a|<t_0$, we have
		\begin{equation}\label{equa4.13}
			\begin{split}
				&\frac{\mu_{u,\varphi,d}(E(a,\frac{s}{2}))+\mu_{v,\psi,d}(E(a,\frac{s}{2}))}{(1-|a|^2)^{n+1}}\\
				&\quad\lesssim \int_{\varphi^{-1}(E(a,\frac{s}{2}))\cup\psi^{-1}(E(a,\frac{s}{2}))}\left(|u(z)|+|v(z)|\right)^{p(z)}|\varphi(z)-\psi(z)|^{p(z)}dV(z)\\
				&\quad\lesssim \int_{\mathbb{B}_n}|u(z)-v(z)|^{p(z)}dV(z)\\
				&\quad\quad\quad+\sum_{i=1}^n\int_{\mathbb{B}_n}|u(z)\varphi_i(z)-v(z)\psi_i(z)|^{p(z)}dV(z)\\
				&\quad=\rho_{p(\cdot)}(u-v)+\sum_{i=1}^n\rho_{p(\cdot)}(u\varphi_i-v\psi_i)<\infty,
			\end{split}
		\end{equation}
		since $u-v=(C_{u,\varphi}-C_{v,\psi})1\in A^{p(\cdot)}(\mathbb{B}_n)$ and $u\varphi_i-v\psi_i=(C_{u,\varphi}-C_{v,\psi})z_i\in A^{p(\cdot)}(\mathbb{B}_n)$ for $i=1,\cdots,n$.
		
		Combining \eqref{equa4.11}, \eqref{equa4.12} and \eqref{equa4.13}, we obtain
		\begin{equation*}
			\sup_{a\in\mathbb{B}_n}\frac{\mu_{u,\varphi,d}(E(a,\frac{s}{2}))}{(1-|a|^2)^{n+1}}<\infty \quad {\rm and} \quad 	\sup_{a\in\mathbb{B}_n}\frac{\mu_{v,\psi,d}(E(a,\frac{s}{2}))}{(1-|a|^2)^{n+1}}<\infty.
		\end{equation*}
		It follows from Lemma \ref{lemma2.06} that $\mu_{u,\varphi,d}$ and $\mu_{v,\psi,d}$ are Carleson measures for $A^{p(\cdot)}(\mathbb{B}_n)$.
		
		Furthermore, when $\varphi(z)\in E(a,\frac{s}{2})$, we have
		\begin{equation*}
			\begin{split}
				&|u(z)-v(z)|\left|\frac{1-\langle \varphi(z),a\rangle}{1-\langle \psi(z),a\rangle}\right|^{N}\\
				&\quad \leq \left|u(z)-v(z)\left(\frac{1-\langle \varphi(z),a\rangle}{1-\langle \psi(z),a\rangle}\right)^N\right|+|u(z)|\left|1-\left(\frac{1-\langle \varphi(z),a\rangle}{1-\langle \psi(z),a\rangle}\right)^N\right|\\
				&\quad \lesssim \left|u(z)-v(z)\left(\frac{1-\langle \varphi(z),a\rangle}{1-\langle \psi(z),a\rangle}\right)^N\right|+|u(z)|\left|\frac{\langle \varphi(z)-\psi(z),a\rangle}{1-\langle \psi(z),a\rangle}\right|.
			\end{split}
		\end{equation*}
		And by \eqref{equa2.1}, \eqref{equa2.3} and \eqref{equa2.4}, we have
		\[\left|\frac{1-\langle \varphi(z),a\rangle}{1-\langle \psi(z),a\rangle}\right|\gtrsim \frac{(1-|\varphi(z)|^2)(1-|\psi(z)|^2)}{|1-\langle \psi(z),\varphi(z)\rangle|^2}=1-d(z)^2\]
		for $z\in \varphi^{-1}(E(a,\frac{s}{2}))$. So combining \eqref{equa4.5} and \eqref{equa4.8}, we obtain
		\begin{equation*}
			\sup_{a\in\mathbb{B}_n}\frac{\int_{\varphi^{-1}(E(a,\frac{s}{2}))}|u(z)-v(z)|^{p(z)}(1-d(z))^{Np(z)}\omega_{\varphi}(z)dV(z)}{(1-|a|^2)^{n+1}}<\infty.
		\end{equation*}
		Then by Lemma \ref{lemma2.06}, we know that $\lambda_{\varphi,\alpha}$ is a Carleson measure for $A^{p(\cdot)}(\mathbb{B}_n)$ when $\alpha\geq Np^+\geq (n+1)p^+$. Similarly, $\lambda_{\psi,\alpha}$ is also a Carleson measure for $A^{p(\cdot)}(\mathbb{B}_n)$.
		
		{\it Compactness}: The proof for the compactness part is just a modification. One only needs to write ``$\lim\limits_{|a|\to 1}$" instead of ``$\sup\limits_{a\in\mathbb{B}_n}$" and write ''$\to 0$" instead of ``$<\infty$", respectively. We omit the details.
	\end{proof}
	
	\begin{prop}\label{proposition4.6}
		Let $p(\cdot)\in\mathcal{P}^{log}(\mathbb{B}_n)$, $u,v\in A^{p(\cdot)}(\mathbb{B}_n)$ and $\varphi,\psi\in H(\mathbb{B}_n)$. If the measures $\mu_{u,\varphi,d}^{(1)}$, $\mu_{v,\psi,d}^{(1)}$, $\lambda_{\varphi,\alpha}^{(1)}$ and $\lambda_{\psi,\alpha}^{(1)}$ are Carleson measures for $A^{p(\cdot)}(\mathbb{B}_n)$, then $C_{u,\varphi}-C_{v,\psi}$ is bounded on $A^{p(\cdot)}(\mathbb{B}_n)$.
	\end{prop}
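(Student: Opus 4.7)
The plan is to show that for any $f \in A^{p(\cdot)}(\mathbb{B}_n)$ with $\|f\|_{p(\cdot)} = 1$, the modular $\rho_{p(\cdot)}((C_{u,\varphi} - C_{v,\psi})f)$ is bounded by an absolute constant; the boundedness of $C_{u,\varphi} - C_{v,\psi}$ then follows from Lemma \ref{lemma1.1}(ii) together with the scaling argument of Proposition \ref{proposition4.3}. A useful preliminary remark is that any Carleson measure $\mu$ for $A^{p(\cdot)}(\mathbb{B}_n)$ is automatically finite: testing the embedding on $f \equiv 1$ gives $\|1\|_{p(\cdot),\mu} \leq C$, and Lemma \ref{lemma1.1} then forces $\mu(\mathbb{B}_n) < \infty$.

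Fix $s \in (0,1)$ with auxiliary parameters $s < s_1 < s_2 < 1$, and decompose $\mathbb{B}_n = G_1 \cup G_2$ where $G_1 = \{z : d(z) \leq s\}$ and $G_2 = \{z : d(z) > s\}$. On $G_2$, the triangle inequality gives $|uf\circ\varphi - vf\circ\psi|^{p(z)} \lesssim |u|^{p(z)}|f\circ\varphi|^{p(z)} + |v|^{p(z)}|f\circ\psi|^{p(z)}$, and since $d(z)^{p(z)} \gtrsim 1$ on $G_2$, one can freely insert a $d(z)^{p(z)}$ factor. Applying the key pointwise inequality from the proof of Proposition \ref{proposition4.3}, namely $|f(\varphi(z))|^{p(z)} \lesssim |f(\varphi(z))|^{p(\varphi(z))}(\omega_\varphi(z)+1) + 1$ for $\|f\|_{p(\cdot)} \leq 1$, transforms the integrand into the density of $\mu_{u,\varphi,d}^{(1)}$ times $|f|^{p(\cdot)} \circ \varphi$, plus that density itself. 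Integrating and invoking the Carleson hypothesis (once for the modular, once for the total mass of $\mu_{u,\varphi,d}^{(1)}$) controls the $u$-contribution on $G_2$; the $v$-contribution is symmetric.

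On $G_1$, split $uf\circ\varphi - vf\circ\psi = (u-v)f\circ\varphi + v(f\circ\varphi - f\circ\psi)$. The $(u-v)f\circ\varphi$ summand is handled exactly as on $G_2$ but with $(1-d(z))^\alpha \gtrsim 1$ in place of $d(z)^{p(z)} \gtrsim 1$, yielding a bound through $\lambda_{\varphi,\alpha}^{(1)}$. For the difference summand, I would apply Lemma \ref{lemma2.5} at the base point $\varphi(z)$ with the \emph{constant} exponent $p = p(z)$ (legitimate since $z$ is held fixed and $\psi(z) \in E(\varphi(z), s_1)$), obtaining
\[
|f(\varphi(z)) - f(\psi(z))|^{p(z)} \lesssim \frac{d(z)^{p(z)}}{V(E(\varphi(z), s_2))} \int_{E(\varphi(z), s_2)} |f(\zeta)|^{p(z)} dV(\zeta).
\]
The technical crux is to convert the exponent $p(z)$ inside this integral back to the variable $p(\zeta)$: by combining Lemma \ref{lemma2.3} with \eqref{equa2.3} and Lemma \ref{lemma2.1} to absorb both $(1-|\varphi(z)|^2)^{p(\varphi(z)) - p(\zeta)} \simeq 1$ and $\omega_\varphi(z)^{p(\varphi(z))/p(\zeta)} \simeq \omega_\varphi(z)$, one extends the Proposition \ref{proposition4.3} inequality to all $\zeta \in E(\varphi(z), s_2)$ in the form $|f(\zeta)|^{p(z)} \lesssim |f(\zeta)|^{p(\zeta)}(\omega_\varphi(z)+1) + 1$.

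Substituting back, $|v(z)|^{p(z)}|f(\varphi(z)) - f(\psi(z))|^{p(z)}$ is dominated by $|v(z)|^{p(z)}[d(z)^{p(z)}\omega_\varphi(z) + 1]\,\tilde M(\varphi(z)) + |v(z)|^{p(z)} d(z)^{p(z)}$, where $\tilde M(w) := V(E(w,s_2))^{-1}\int_{E(w,s_2)} |f(\zeta)|^{p(\zeta)} dV(\zeta)$. A brief Lemma \ref{lemma2.1} computation yields $\omega_\varphi \simeq \omega_\psi$ on $G_1$, and the Bergman-metric triangle inequality provides an inclusion $E(\varphi(z), s_2) \subset E(\psi(z), s_3)$ for a suitable $s_3 = s_3(s, s_2)$, which replaces $\tilde M(\varphi(z))$ by its $\psi$-centered analogue up to a harmless constant. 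The resulting expression has the form $\int \tilde M(w) d\mu_{v,\psi,d}^{(1)}(w)$, and a Fubini argument combined with \eqref{equa2.2}, \eqref{equa2.3}, and the geometric Carleson characterization (Lemma \ref{lemma2.06}) bounds this by a constant multiple of $\rho_{p(\cdot)}(f) = 1$. The principal obstacle throughout is this uniform extension of the key inequality to an entire Bergman ball around $\varphi(z)$; once it is established, the remainder is routine bookkeeping. The hypothesis on $\lambda_{\psi,\alpha}^{(1)}$ is not exploited by this particular splitting, but the dual decomposition $(u-v)f\circ\psi + u(f\circ\varphi - f\circ\psi)$ furnishes a symmetric argument employing $\lambda_{\psi,\alpha}^{(1)}$ and $\mu_{u,\varphi,d}^{(1)}$ instead.
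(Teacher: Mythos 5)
Your proposal is correct and follows essentially the same route as the paper's proof: the same decomposition into the regions where $d(z)$ is large and small, the same further splitting of the small-$d$ part into a $(u-v)$ term controlled by $\lambda_{\varphi,\alpha}^{(1)}$ and a difference term controlled via Lemma \ref{lemma2.5}, and the same Fubini argument against the geometric Carleson characterization at the end. The only cosmetic differences are that the paper applies Lemma \ref{lemma2.5} with exponent $1$ and then the Jensen inequality (Lemma \ref{lemma2.2}) rather than with exponent $p(z)$ directly, and that its cruder bound $(|u|+|v|)^{p(z)}|f\circ\varphi-f\circ\psi|^{p(z)}$ lets each weight stay paired with its own symbol, avoiding your (valid but unnecessary) comparison $\omega_\varphi\simeq\omega_\psi$.
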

	
	\begin{proof}
		Assume $\mu_{u,\varphi,d}^{(1)}$, $\mu_{v,\psi,d}^{(1)}$, $\lambda_{\varphi,\alpha}^{(1)}$ and $\lambda_{\psi,\alpha}^{(1)}$ are Carleson measures for $A^{p(\cdot)}(\mathbb{B}_n)$. Fix $0<s_1<s_2<1$, set $E_{s_1}=\{z\in\mathbb{B}_n:d(z)<s_1\}$. Let $f\in A^{p(\cdot)}(\mathbb{B}_n)$ with $\|f\|_{p(\cdot)}=1$. We write
		\begin{equation*}
			\begin{split}
				&\rho_{p(\cdot)}((C_{u,\varphi}-C_{v,\psi})f)\\
				&\quad =\int_{\mathbb{B}_n-E_{s_1}}\left|(C_{u,\varphi}-C_{v,\psi})f(z)\right|^{p(z)}dV(z)+\int_{E_{s_1}}\left|(C_{u,\varphi}-C_{v,\psi})f(z)\right|^{p(z)}dV(z)\\
				&\quad :=I(f)+II(f).
			\end{split}
		\end{equation*}
		By Lemma \ref{lemma2.3}, we have
		\begin{equation}\label{equa4.14}
			\begin{split}
				I(f)
				&\leq \frac{1}{s_1^{p^+}}\int_{\mathbb{B}_n-E_{s_1}}|(C_{u,\varphi}-C_{v,\psi})f(z)|^{p(z)}d(z)^{p(z)}dV(z)\\
				&\lesssim \int_{\mathbb{B}_n}\left(|u(z)|^{p(z)}|f(\varphi(z))|^{p(z)}+|v(z)|^{p(z)}|f(\psi(z))|^{p(z)}\right)d(z)^{p(z)}dV(z)\\
				& \lesssim \int_{\mathbb{B}_n}\left(|u(z)|^{p(z)}+|v(z)|^{p(z)}\right)dV(z)\\
				&\quad  +\int_{\mathbb{B}_n}|f(\varphi(z))|^{p(\varphi(z))}|u(z)|^{p(z)}\left[d(z)^{p(z)}\omega_{\varphi}(z)+1\right]dV(z)\\
				&\quad  +\int_{\mathbb{B}_n}|f(\psi(z))|^{p(\psi(z))}|v(z)|^{p(z)}\left[d(z)^{p(z)}\omega_{\psi}(z)+1\right]dV(z)\\
				&=\rho_{p(\cdot)}(u)+\rho_{p(\cdot)}(v)+\rho_{p(\cdot),\mu_{u,\varphi,d}^{(1)}}(f)+\rho_{p(\cdot),\mu_{v,\psi,d}^{(1)}}(f).
			\end{split}
		\end{equation}
		Now we estimate $II(f)$. Clearly,
		\begin{equation*}
			\begin{split}
				II(f)&\lesssim \int_{E_{s_1}}|u(z)-v(z)|^{p(z)}\left(|f(\varphi(z))|+|f(\psi(z))|\right)^{p(z)}dV(z)\\
				&\quad \quad+\int_{E_{s_1}}\left(|u(z)|+|v(z)|\right)^{p(z)}|f(\varphi(z))-f(\psi(z))|^{p(z)}dV(z)\\
				&:=II_1(f)+II_2(f).
			\end{split}
		\end{equation*}
		By Lemma \ref{lemma2.3} again, we have
		\begin{equation}\label{equa4.15}
			\begin{split}
				II_1(f)
				&\lesssim\int_{E_{s_1}}|u(z)-v(z)|^{p(z)}\left(|f(\varphi(z))|+|f(\psi(z))|\right)^{p(z)}(1-d(z))^{\alpha}dV(z)\\
				&\lesssim \int_{\mathbb{B}_n}|u(z)-v(z)|^{p(z)}dV(z)\\
				&\quad +\int_{\mathbb{B}_n}|f(\varphi(z))|^{p(\varphi(z))}|u(z)-v(z)|^{p(z)}(1-d(z))^{\alpha}(\omega_{\varphi}(z)+1)dV(z)\\
				&\quad +\int_{\mathbb{B}_n}|f(\psi(z))|^{p(\psi(z))}|u(z)-v(z)|^{p(z)}(1-d(z))^{\alpha}(\omega_{\psi}(z)+1)dV(z)\\
				&=\rho_{p(\cdot)}(u-v)+\rho_{p(\cdot),\lambda_{\varphi,\alpha}^{(1)}}(f)+\rho_{p(\cdot),\lambda_{\psi,\alpha}^{(1)}}(f).
			\end{split}
		\end{equation}
		When $z\in E_{s_1}$, by Lemma \ref{lemma2.2} and Lemma \ref{lemma2.5}, we obtain
		\begin{equation*}
			\begin{split}
				&|f(\varphi(z))-f(\psi(z))|^{p(\varphi(z))}\\
				&\quad \lesssim \left(\frac{d(z)}{(1-|\varphi(z)|^2)^{n+1}}\int_{E(\varphi(z),s)}|f(w)|dV(w)\right)^{p(\varphi(z))}\\
				&\quad \lesssim d(z)^{p(\varphi(z))}\left(1+\frac{1}{(1-|\varphi(z)|^2)^{n+1}}\int_{E(\varphi(z),s)}|f(w)|^{p(w)}dV(w)\right)\\
				&\quad \lesssim d(z)^{p(\varphi(z))}\frac{1}{(1-|\varphi(z)|^2)^{n+1}}
			\end{split}
		\end{equation*}
		Hence when $z\in E_{s_1}$ and $|f(\varphi(z))-f(\psi(z))|\geq 1$, we obtain
		\begin{equation*}
			\begin{split}
				&|f(\varphi(z))-f(\psi(z))|^{p(z)}=|f(\varphi(z))-f(\psi(z))|^{p(z)-p(\varphi(z))+p(\varphi(z))}\\
				&\quad \lesssim \left(1+d(z)^{p(z)}\omega_{\varphi}(z)\right)\left(1+\frac{1}{(1-|\varphi(z)|^2)^{n+1}}\int_{E(\varphi(z),s)}|f(w)|^{p(w)}dV(w)\right).
			\end{split}
		\end{equation*}
		Similarly,
		\begin{equation*}
			\begin{split}
				&|f(\varphi(z))-f(\psi(z))|^{p(z)}\\
				&\quad \lesssim\left(1+d(z)^{p(z)}\omega_{\psi}(z)\right)\left(1+\frac{1}{(1-|\psi(z)|^2)^{n+1}}\int_{E(\psi(z),s)}|f(w)|^{p(w)}dV(w)\right)
			\end{split}
		\end{equation*}
		when $z\in E_{s_1}$ and $|f(\varphi(z))-f(\psi(z))|>1$. Therefore, by \eqref{equa2.3} and Fubini's Theorem, we obtain
		\begin{equation}\label{equa4.16}
			\begin{split}
				II_2(f)	&\lesssim \int_{\mathbb{B}_n}\frac{|u(z)|^{p(z)}\left[d(z)^{p(z)}\omega_{\varphi}(z)+1\right]}{(1-|\varphi(z)|^2)^{n+1}}\int_{E(\varphi(z),s)}|f(w)|^{p(w)}dV(w)dV(z)\\
				&\quad +\int_{\mathbb{B}_n}\frac{|v(z)|^{p(z)}\left[d(z)^{p(z)}\omega_{\psi}(z)+1\right]}{(1-|\psi(z)|^2)^{n+1}}\int_{E(\psi(z),s)}|f(w)|^{p(w)}dV(w)dV(z)\\
				&\quad +\int_{\mathbb{B}_n}\left(|u(z)|^{p(z)}+|v(z)|^{p(z)}\right)dV(z)+1\\
				&\lesssim 1+\rho_{p(\cdot)}(u)+\rho_{p(\cdot)}(v)+\sup_{w\in\mathbb{B}_n}\frac{\mu_{u,\varphi,d}(E(w,s))+\mu_{v,\psi,d}(E(w,s))}{(1-|w|^2)^{n+1}}.
			\end{split}
		\end{equation}
		Combining \eqref{equa4.14}, \eqref{equa4.15} and \eqref{equa4.16}, we obtain
		\begin{equation*}
			\begin{split}
				&\rho_{p(\cdot)}((C_{u,\varphi}-C_{v,\psi})f)\\
				&\quad \lesssim 1+\rho_{p(\cdot)}(u)+\rho_{p(\cdot)}(v)+\rho_{p(\cdot)}(u-v)\\
				&\quad\quad  +\rho_{p(\cdot),\mu_{u,\varphi,d}^{(1)}}(f)+\rho_{p(\cdot),\mu_{v,\psi,d}^{(1)}}(f)+\rho_{p(\cdot),\lambda_{\varphi,\alpha}^{(1)}}(f)+\rho_{p(\cdot),\lambda_{\psi,\alpha}^{(1)}}(f)\\
				&\quad\quad +\sup_{w\in\mathbb{B}_n}\frac{\mu_{u,\varphi,d}^{(1)}(E(w,s))+\mu_{v,\psi,d}^{(1)}(E(w,s))}{(1-|w|^2)^{n+1}}
			\end{split}
		\end{equation*}
		for any $f\in A^{p(\cdot)}(\mathbb{B}_n)$ with $\|f\|_{p(\cdot)}=1$. It follows that $C_{u,\varphi}-C_{v,\psi}$ is bounded on $A^{p(\cdot)}(\mathbb{B}_n)$.
	\end{proof}
	
	\begin{thm}\label{theorem4.7}
		Let $p(\cdot)\in\mathcal{P}^{log}(\mathbb{B}_n)$, $u,v\in H(\mathbb{B}_n)$ and $\varphi,\psi\in S(\mathbb{B}_n)$. Suppose $p(z)\geq \max\{p(\varphi(z)),p(\psi(z))\} $ a.e. Then $C_{u,\varphi}-C_{v,\psi}$ is bounded on $A^{p(\cdot)}(\mathbb{B}_n)$ if and only if the measures $\mu_{u,\varphi,d}$, $\mu_{v,\psi,d}$, $\lambda_{\varphi,\alpha}$ and $\lambda_{\psi,\alpha}$ are Carleson measures for $A^{p(\cdot)}(\mathbb{B}_n)$.
	\end{thm}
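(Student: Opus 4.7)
The necessity is immediate from Theorem \ref{theorem4.5}, so only sufficiency needs proof. My plan is to rerun the proof of Proposition \ref{proposition4.6} under the stronger hypothesis $p(z)\geq\max\{p(\varphi(z)),p(\psi(z))\}$, which allows the ``$+1$'' in the measures $\mu^{(1)}$ and $\lambda^{(1)}$ to be dropped, so that the Carleson property of $\mu_{u,\varphi,d}$, $\mu_{v,\psi,d}$, $\lambda_{\varphi,\alpha}$, $\lambda_{\psi,\alpha}$ by themselves suffices.

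The pivotal pointwise estimate is as follows. Under $p(z)\geq p(\varphi(z))$ a.e.\ one has $\omega_{\varphi}(z)\geq 1$, and for any $f\in A^{p(\cdot)}(\mathbb{B}_n)$ with $\|f\|_{p(\cdot)}\leq 1$,
\[|f(\varphi(z))|^{p(z)}\lesssim |f(\varphi(z))|^{p(\varphi(z))}\,\omega_{\varphi}(z)\qquad\text{a.e.\ }z\in\mathbb{B}_n.\]
This follows by a case split: if $|f(\varphi(z))|\leq 1$, then $|f(\varphi(z))|^{p(z)-p(\varphi(z))}\leq 1\leq\omega_{\varphi}(z)$, since the exponent is nonnegative; if $|f(\varphi(z))|>1$, Lemma \ref{lemma2.3} gives $|f(\varphi(z))|\lesssim (1-|\varphi(z)|^2)^{-(n+1)/p(\varphi(z))}$, and raising to the nonnegative power $p(z)-p(\varphi(z))$ yields $|f(\varphi(z))|^{p(z)-p(\varphi(z))}\lesssim\omega_{\varphi}(z)$. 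The same holds with $\psi$ in place of $\varphi$.

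With this estimate in hand I would redo the three-term decomposition $I(f),II_1(f),II_2(f)$ from Proposition \ref{proposition4.6}. For $I(f)$, where $d(z)\geq s_1$, the pointwise bound applied to $|u|^{p(z)}|f(\varphi(z))|^{p(z)}d(z)^{p(z)}$ (and its $\psi$ counterpart) directly produces $I(f)\lesssim \rho_{p(\cdot),\mu_{u,\varphi,d}}(f)+\rho_{p(\cdot),\mu_{v,\psi,d}}(f)$, with no $\rho_{p(\cdot)}(u)+\rho_{p(\cdot)}(v)$ residue. For $II_1(f)$, using $(1-d(z))^{\alpha}\geq(1-s_1)^{\alpha}$ on $E_{s_1}$, the same idea yields $II_1(f)\lesssim \rho_{p(\cdot),\lambda_{\varphi,\alpha}}(f)+\rho_{p(\cdot),\lambda_{\psi,\alpha}}(f)$.

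The main obstacle is $II_2(f)$, the cancellation term. I would apply the same case analysis to $|f(\varphi(z))-f(\psi(z))|^{p(z)-p(\varphi(z))}$: Lemma \ref{lemma2.5} with $p=1$ combined with Lemma \ref{lemma2.3} gives $|f(\varphi(z))-f(\psi(z))|\lesssim d(z)(1-|\varphi(z)|^2)^{-(n+1)/p(\varphi(z))}$, so this factor is $\lesssim\omega_{\varphi}(z)$; combining with the Lemma \ref{lemma2.2}/Lemma \ref{lemma2.5} estimate of $|f(\varphi(z))-f(\psi(z))|^{p(\varphi(z))}$ yields
\[|f(\varphi(z))-f(\psi(z))|^{p(z)}\lesssim d(z)^{p(z)}\,\omega_{\varphi}(z)\left(1+\frac{1}{V(E(\varphi(z),s))}\int_{E(\varphi(z),s)}|f(w)|^{p(w)}\,dV(w)\right).\]
Pairing $|u|^{p(z)}$ with this $\varphi$-centered estimate and $|v|^{p(z)}$ with the symmetric $\psi$-centered one, and applying Fubini as in the proof of Proposition \ref{proposition4.6}, bounds $II_2(f)$ by a constant multiple of $\sup_{w\in\mathbb{B}_n}\frac{\mu_{u,\varphi,d}(E(w,s))+\mu_{v,\psi,d}(E(w,s))}{(1-|w|^2)^{n+1}}$ together with $\mu_{u,\varphi,d}(\mathbb{B}_n)+\mu_{v,\psi,d}(\mathbb{B}_n)$; the latter two are finite because any Carleson measure for $A^{p(\cdot)}(\mathbb{B}_n)$ has finite total mass (test the Carleson inequality against the constant function $1$). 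Summing and scaling to general $\|f\|_{p(\cdot)}$ as in Proposition \ref{proposition4.3} then gives $\|(C_{u,\varphi}-C_{v,\psi})f\|_{p(\cdot)}\lesssim\|f\|_{p(\cdot)}$, which completes the sufficiency.
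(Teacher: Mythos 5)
Your proof is correct and takes essentially the same route as the paper: the paper likewise reduces sufficiency to rerunning Proposition \ref{proposition4.6} under the hypothesis $p(z)\geq\max\{p(\varphi(z)),p(\psi(z))\}$ and records exactly your three bounds $I(f)\lesssim \rho_{p(\cdot),\mu_{u,\varphi,d}}(f)+\rho_{p(\cdot),\mu_{v,\psi,d}}(f)$, $II_1(f)\lesssim \rho_{p(\cdot),\lambda_{\varphi,\alpha}}(f)+\rho_{p(\cdot),\lambda_{\psi,\alpha}}(f)$ and the Fubini estimate for $II_2(f)$, with your pointwise inequality $|f(\varphi(z))|^{p(z)}\lesssim |f(\varphi(z))|^{p(\varphi(z))}\,\omega_{\varphi}(z)$ being precisely the detail the paper leaves implicit. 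One cosmetic point: in the $II_2$ step you should retain the factor $d(z)^{p(z)-p(\varphi(z))}\leq 1$ rather than discarding it, since multiplying it against $d(z)^{p(\varphi(z))}$ is what produces the $d(z)^{p(z)}$ appearing in your displayed estimate.
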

	
	\begin{proof}
		The necessity has been proved in Theorem \ref{theorem4.5}. And following the arguments in the proof of Proposition \ref{proposition4.6} step by step, we can prove the sufficiency. 
		
		In fact, assume $\mu_{u,\varphi,d}$, $\mu_{v,\psi,d}$, $\lambda_{\varphi,\alpha}$ and $\lambda_{\psi,\alpha}$ are Carleson measures for $A^{p(\cdot)}(\mathbb{B}_n)$ and $p(z)\geq \max\{p(\varphi(z)),p(\psi(z))\} $. Modifying the proof of Proposition \ref{proposition4.6}, for any $f\in A^{p(\cdot)}(\mathbb{B}_n)$, by Lemma \ref{lemma2.3}, we obtain
		\begin{equation}\label{equa4.17}
			I(f)\lesssim \rho_{p(\cdot),\mu_{u,\varphi,d}}(f)+\rho_{p(\cdot),\mu_{v,\psi,d}}(f)\lesssim 1,
		\end{equation}
		and
		\begin{equation}\label{equa4.18}
			II_1(f)\lesssim \rho_{p(\cdot),\lambda_{\varphi,\alpha}}(f)+\rho_{p(\cdot),\lambda_{\psi,\alpha}}(f)\lesssim 1.
		\end{equation}
		By Lemma \ref{lemma2.5} and Fubini's Theorem, we obtain
		\begin{equation}\label{equa4.19}
			II_2(f)\lesssim 1+\sup_{w\in\mathbb{B}_n}\frac{\mu_{u,\varphi,d}(E(w,s))+\mu_{v,\psi,d}(E(w,s))}{(1-|w|^2)^{n+1}}\lesssim 1.
		\end{equation}
		Therefore, combining \eqref{equa4.17}, \eqref{equa4.18} and \eqref{equa4.19}, and by condition (ii) in Lemma \ref{lemma1.1}, we get
		\begin{equation*}
			\begin{split}
				\|(C_{u,\varphi}-C_{v,\psi})f\|_{p(\cdot)}&\leq \rho_{p(\cdot)}((C_{u,\varphi}-C_{v,\psi})f)+1\\
				&\lesssim I(f)+II_1(f)+II_2(f)+1\lesssim 1
			\end{split}
		\end{equation*}
		for any $f\in A^{p(\cdot)}(\mathbb{B}_n)$ with $\|f\|_{p(\cdot)}=1$. Then a scaling argument yields the boundedness of $C_{u,\varphi}-C_{v,\psi}$ on $A^{p(\cdot)}(\mathbb{B}_n)$.
	\end{proof}
	
	\begin{rem}
		In the case of variable exponent, it remains an open question whether the sufficiency stated in Theorem 4.7 applies to the compactness of the operator $C_{u,\varphi}-C_{v,\psi}$, even though it is valid in the scenario of constant exponent.
	\end{rem}

	
	\subsection*{Acknowledgment}
	The authors thank the referees for the comments and suggestions have led to the improvement of the paper.
	
	\subsection*{Declaration}
	The authors declare that they have no conflicts of interest. No data was used for the research described in this article.



\end{document}